\newtheorem{thm}{Theorem}[section]
\newtheorem{prop}[thm]{Proposition}
\newtheorem{lem}[thm]{Lemma}
\newtheorem{conj}[thm]{Conjecture}
\theoremstyle{definition}
\newtheorem{Def}[thm]{Definition}
\newtheorem{rem}[thm]{Remark}
\newtheorem*{rem*}{Remark}
\newcommand{\kk}{\mathbf{k}}
\newcommand{\xx}{\mathbf{x}}
\newcommand{\WW}{\mathcal {W}}
\newcommand{\PP}{\mathcal {P}}
\newcommand{\VV}{\mathcal {V}}
\newcommand{\FF}{\mathcal {F}}
\newcommand{\Zz}{\mathbb {Z}}
\newcommand{\Rr}{\mathbb {R}}
\newcommand{\Qq}{\mathbb {Q}}
\def\w{\widetilde}
\def\xr{\xrightarrow}
\numberwithin{equation}{section}
\begin{document}
	\title[Weak Lefschetz property of PL-spheres]{Weak Lefschetz property of PL-spheres}
	\author[F.~Fan]{Feifei Fan}
	\thanks{The author is supported by the National Natural Science Foundation of China (Grant Nos. 11801580, 11871284)}
	\address{Feifei Fan, School of Mathematical Sciences, South China Normal University, Guangzhou, 510631, China.}
	\email{fanfeifei@mail.nankai.edu.cn}
	\subjclass[2010]{Primary 13F55; Secondary 05E45, 14M25.}
	\maketitle
	\begin{abstract}
A recent result of Papadakis-Petrotou shows that every simplicial sphere has the weak Lefschetz property in characteristic $2$. In this paper, we give a simpler proof of this result for PL-spheres by showing that the weak Lefschetz property in characteristic $2$ is preserved by bistellar moves. 
	Several applications are given.
	\end{abstract}
	\section{Introduction}\label{sec:introduction}
	\renewcommand{\thethm}{\arabic{thm}}
	Our motivating problem is the well-known $g$-conjecture for simplicial spheres.
	It was first proposed by McMullen \cite{Mc71} in 1971 for a complete characterization of the $f$-vectors (i.e. face numbers) of simplicial polytopes. Less than ten years later the $g$-conjecture was proved as a theorem. To describe this theorem, let us review some notions.
	
	For a $(d-1)$-dimensional simplicial complex $\Delta$, the \emph{$f$-vector} of $\Delta$ is
	\[(f_0,f_1,\dots,f_{d-1}),\]
	where $f_i$ is the number of the $i$-dimensional faces of $\Delta$.
	Sometimes it is convenient to set $f_{-1}=1$ corresponding to the empty set. The \emph{$h$-vector} of $\Delta$ is the integer vector
	$(h_0,h_1,\dots,h_d)$ defined from the equation
	\[
	h_0t^d+\cdots+h_{d-1}t+h_d=f_{-1}(t-1)^d+f_0(t-1)^{d-1}+\cdots+f_{d-1}.
	\]
	The $f$-vector and the $h$-vector contain equivalent combinatorial information about $\Delta$, and determine each other by means of linear relations
	\[h_{i}=\sum_{j=0}^{i}(-1)^{i-j}\binom{d-j}{d-i}f_{j-1}, \quad f_{i-1}=\sum_{j=0}^{i}\binom{d-j}{d-i}h_{j}, \quad \text { for } 0 \leq i \leq d.\]
	The sequence $(g_0,g_1,\dots,g_{\lfloor d/2 \rfloor})=(h_0,h_1-h_0,\dots,h_{\lfloor d/2 \rfloor}-h_{\lfloor d/2 \rfloor-1})$ is usually called the \emph{$g$-vector} of $\Delta$.
	
	For any two positive integers $a$ and $i$ there is a unique way to write
	\[a=\binom{a_i}{i}+\binom{a_{i-1}}{i-1}+\cdots+\binom{a_j}{j}\]
	with $a_{i}>a_{i-1}>\cdots>a_{j} \geq j \geq 1$. Define the $i$th \emph{pseudopower of $a$} as
	\[a^{\langle i\rangle}=\binom{a_i+1}{i+1}+\binom{a_{i-1}+1}{i}+\cdots+\binom{a_j+1}{j+1}.\]
	Here $0^{\langle i\rangle}=0$ for all $i$.
	
	\begin{thm}[$g$-theorem]
		An integer vector $(h_0,h_1,\dots, h_d)$ is the $h$-vector of a polytopal $(d-1)$-sphere (the boundary complex of a simplicial $d$-polytope) if and only if
		\begin{enumerate}[(a)]
			\item $h_i=h_{d-i}$ for $i=0,1,\dots,d$ (the Dehn-Sommerville relations);
			\item $1=h_0\leq h_1\leq\cdots\leq h_{\lfloor d/2\rfloor}$;
			\item the $g$-vector satisfies $g_{i+1}\leq g_i^{\langle i\rangle}$ for $i\geq 1$.
		\end{enumerate}
	\end{thm}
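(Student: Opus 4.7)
The argument naturally splits into two independent halves: the necessity of conditions (a)--(c) for $h$-vectors of boundaries of simplicial polytopes, and their sufficiency, i.e.\ the construction of a polytope realizing each admissible $h$-vector. Historically these are Stanley's and Billera--Lee's contributions, and I would organize the proof along the same lines.

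For necessity I would study the Stanley--Reisner ring $\kk[\Delta]$ of $\Delta = \partial P$. Since $\Delta$ is a sphere it is Cohen--Macaulay, so choosing a linear system of parameters $\theta_1,\dots,\theta_d$ produces a graded Artinian $\kk$-algebra $A(\Delta):=\kk[\Delta]/(\theta_1,\dots,\theta_d)$ with $\dim_\kk A(\Delta)_i = h_i$. After a small perturbation one may assume $P$ is rational; then $A(\Delta)\otimes\Qq$ is canonically isomorphic to the rational cohomology ring of the projective toric variety $X_P$. Poincar\'e duality on $X_P$ yields (a). Applying the hard Lefschetz theorem to an ample class $\omega\in A(\Delta)_1$ shows that multiplication $\cdot\omega:A(\Delta)_{i-1}\to A(\Delta)_i$ is injective for $i\le d/2$, whence (b). Finally $A(\Delta)/(\omega)$ is itself a standard graded Artinian $\kk$-algebra whose Hilbert function in degrees $0,\dots,\lfloor d/2\rfloor$ is precisely the $g$-vector, so Macaulay's theorem on Hilbert functions of graded $\kk$-algebras forces the inequalities (c).

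For sufficiency I would follow Billera and Lee: given an integer vector $(1,g_1,\dots,g_{\lfloor d/2\rfloor})$ satisfying (c), Macaulay's theorem furnishes a compressed order ideal of monomials whose face counts match the $g$-vector, and this combinatorial datum can be realized geometrically by performing a prescribed sequence of stellar subdivisions on the boundary of the cyclic polytope $C(d,d+g_1)$, producing a simplicial $d$-polytope whose boundary has exactly the target $h$-vector; verifying the $h$-vector matches requires a shelling of the resulting complex compatible with the revlex order on the monomial ideal.

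The main obstacle is the necessity half. The hard Lefschetz input requires either a density argument reducing to rational polytopes (so that $X_P$ is well-defined as a toric variety and classical Hodge theory applies) or, failing that, a replacement for toric geometry such as McMullen's polytope algebra or Karu's combinatorial intersection cohomology to construct a Lefschetz element intrinsically. Identifying such an element and proving the Lefschetz property in a non-algebro-geometric setting is the technical heart of the proof, and it is precisely the analogue of this step that must be carried out for PL-spheres in the rest of the paper.
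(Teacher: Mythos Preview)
The paper does not actually prove this theorem; it is stated as classical background, with the sufficiency attributed to Billera--Lee \cite{BL81} and the necessity to Stanley \cite{S80} (with McMullen's later polytope-algebra proof \cite{Mc93,Mc96} mentioned as an alternative). The only argument the paper supplies is the short paragraph after Theorem~\ref{thm:Lefschetz} explaining why the hard Lefschetz property for $\Qq[\Delta]/\Theta$ forces the $g$-vector to be an $M$-vector via Macaulay's theorem. Your necessity sketch is exactly this line of reasoning, and in that sense it agrees with the paper's treatment.

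One genuine inaccuracy in your sufficiency half: the Billera--Lee construction does not proceed by stellar subdivisions of $\partial C(d,d+g_1)$. What they do is work inside the boundary of a cyclic $(d+1)$-polytope $C(d+1,n)$ with $n$ large, select (using the compressed order ideal coming from Macaulay's theorem) a collection of facets forming a shellable $d$-ball, and then show this ball is precisely the set of facets visible from some point beyond a facet of $C(d+1,n)$; the convex hull of that point with $C(d+1,n)$ is the desired simplicial $d$-polytope. No subdivisions are performed. Since the paper merely cites \cite{BL81} this does not create a discrepancy with the paper itself, but your description of the construction would not lead to a correct proof if carried out as written.
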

	Condition (a) says that the $h$-vector of a simplicial polytope is symmetric. Condition (b) says that the $h$-vector is unimodal, growing up to the middle, while (c) gives a restriction on the rate of this growth. Both (b) and (c) can be reformulated by saying that the $g$-vector
	is an $M$-vector, explaining the name `$g$-theorem'. (A sequence of integers $(k_0,k_1,k_2,\dots)$ satisfies $k_{0}=1$ and  $0\leq k_{i+1} \leq k_{i}^{\langle i\rangle}$ for $i\geq1$ is called an \emph{$M$-sequence}. Finite $M$-sequences are \emph{$M$-vectors}.)
	
	Both necessity and sufficiency parts of the $g$-theorem were proved almost simultaneously (around 1980).
	The sufficiency of McMullen's conditions was proved by Billera and Lee \cite{BL81}, while their necessity was proved by Stanley \cite{S80} by using the theory of toric varieties. In 1993, McMullen gave another proof of the necessity part of $g$-theorem using complicated and difficult convex geometry \cite{Mc93}.
	Later, McMullen simplified his approach in \cite{Mc96}.
	The idea behind both Stanley's and McMullen's proofs was to find a ring whose Hilbert function equals the $h$-vector of the polytope, and satisfying the Hard Lefschetz theorem (Theorem \ref{thm:Lefschetz}).
	
	The $g$-theorem  gives a complete characterisation of integral vectors arising as the $f$-vectors of polytopal spheres.
	It is therefore natural to ask whether the $g$-theorem extends to all sphere triangulations. This question has been regarded as one of the
	main open problems in the theory of face enumeration:
	\begin{conj}[$g$-conjecture]\label{conj:g-con}
		The $g$-theorem holds for all simplicial spheres, or even for all rational homology spheres.
	\end{conj}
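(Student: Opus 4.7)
For a rational homology sphere $\Delta$ of dimension $d-1$, Reisner's criterion shows that the Stanley--Reisner ring $\kk[\Delta]$ is Cohen--Macaulay over any field $\kk\supseteq\Qq$, and the Gorenstein duality coming from $\w H_{d-1}(\Delta;\kk)\cong\kk$ upgrades this to Gorenstein. Fixing a linear system of parameters $\Theta=(\theta_1,\dots,\theta_d)$, the Artinian reduction $A=A(\Delta,\Theta)=\kk[\Delta]/(\Theta)$ is a graded Artinian Gorenstein algebra with socle in degree $d$ and $\dim_\kk A_i=h_i$. The \emph{weak Lefschetz property} (WLP) asks for $\omega\in A_1$ with $\omega\cdot\colon A_i\to A_{i+1}$ of maximal rank for each $i$; Gorenstein duality renders this equivalent to injectivity for $i<d/2$ and surjectivity for $i\ge d/2$, which immediately yields unimodality (b) and Dehn--Sommerville (a). Applying Macaulay's theorem to the standard graded quotient $A/\omega A$, whose Hilbert function is the $g$-vector followed by zeros, then produces (c). Thus the conjecture reduces to exhibiting a field $\kk\supseteq\Qq$ and a pair $(\Theta,\omega)$ for which WLP holds.

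\textbf{Strategy: induction on dimension via anisotropy.} I would prove the stronger strong Lefschetz property (SLP) by induction on $d$, working over a sufficiently large purely transcendental extension $K$ of $\Qq$ and taking both $\Theta$ and $\omega$ to be generic over $\Qq$. Via the Gorenstein pairing $A_i\otimes A_{d-i}\to A_d\cong K$, SLP is equivalent to the non-degeneracy of the symmetric bilinear form $(f,g)\mapsto(\omega^{d-2i}fg)_{A_d}$ on $A_i$ for each $i\le d/2$. The most robust route to non-degeneracy is to establish the \emph{anisotropy} statement: no nonzero $f\in A_i$ satisfies $\omega^{d-2i}f^2=0$. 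The inductive step feeds SLP for the Artinian reductions of links into SLP for $A$ via the ring identification
\[
A\big/\mathrm{ann}_A(x_v)\;\cong\;\kk[\mathrm{lk}_\Delta v]\big/(\bar\Theta),
\]
which lets one bound the kernel of $\omega^{d-2i}\cdot$ on $A_i$ in terms of the sum of images under the star maps from links, each of which is controlled by the inductive hypothesis.

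\textbf{Establishing anisotropy.} Following the framework of Adiprasito and of Papadakis--Petrotou--Karu--Xiao, anisotropy can be verified by realising the socle $A_d$ as the image of a differential-operator action on a generic volume form built from $\Theta$, and arguing that no nontrivial square can lie in the kernel when the parameters are Zariski-generic. Crucially, this argument uses only that every link is itself a rational homology sphere (equivalently, that $\Delta$ is a Buchsbaum$^*$/homology manifold satisfying Poincar\'e duality locally), so no PL structure or shellability is invoked. Combined with the inductive step, one obtains SLP, and hence WLP, for $A(\Delta,\Theta)$ over $K$; descent to $\Qq$ is then automatic because maximal rank is a Zariski-open condition on $(\Theta,\omega)$.

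\textbf{Main obstacle.} The hard part is the interaction between the two ingredients. Anisotropy needs $(\Theta,\omega)$ to be generic at the level of $\Delta$, while the inductive step needs their restrictions to be admissible \emph{simultaneously} on the link of every vertex. For PL spheres one can exploit combinatorial moves to enforce this compatibility, but for an arbitrary rational homology sphere the links can be arbitrary lower-dimensional homology spheres with no such combinatorial handle. The decisive step is therefore a genericity theorem: a Zariski-open choice of $(\Theta,\omega)$ over a large enough transcendental extension of $\Qq$ yields the Lefschetz property simultaneously on $\Delta$ and on every link. Proving this stability under restriction, and ensuring that the anisotropy witness descends compatibly through the vertex-link recursion, is where the real work lies; the remainder of the argument is then a bookkeeping application of Macaulay's theorem and Gorenstein duality to conclude the full $g$-conjecture.
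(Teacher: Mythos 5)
Your proposal aims at the full conjecture for all rational homology spheres, whereas the paper proves only the restricted statement for PL homology spheres (Corollary~\ref{cor:g-theorem}), and the two routes are genuinely different. The paper's argument is combinatorial and essentially PL: it shows that the weak Lefschetz property is preserved under bistellar moves --- invoking Swartz's Theorem~\ref{thm:bistellar} for bistellar $i$-moves with $i\neq\lfloor d/2\rfloor$, and supplying a new linear-algebra argument (short exact sequences relating $\Delta$, $\mathrm{st}_\sigma\Delta$, and $D=\Delta-\sigma$, together with a perturbation $\omega+tx_1$ and Lemma~\ref{lem:linear algebra}) for the remaining middle-dimensional move when $d=2n+1$ --- then propagates WLP from $\partial\Delta^{d}$ to an arbitrary even-dimensional PL-sphere via Pachner's theorem, and finally handles odd-dimensional PL-spheres by passing to vertex links using Theorem~\ref{thm:link wlp}. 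Your proposal instead follows the anisotropy and link-induction framework of Adiprasito and Papadakis--Petrotou--Karu--Xiao, which, if carried out, would be strictly stronger since it uses no PL hypothesis.

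As written, though, your proposal has a genuine gap exactly where the paper substitutes Pachner's theorem. You correctly reduce to a Lefschetz-type statement and correctly identify anisotropy of the Gorenstein pairing as the mechanism, but you then explicitly defer the decisive technical lemma --- simultaneous genericity of $(\Theta,\omega)$ on $\Delta$ and on all vertex links, and compatibility of the anisotropy witness under the link recursion --- describing it as ``where the real work lies'' without supplying it. That lemma is not bookkeeping; it is the whole content of the theorem, and the known proofs of anisotropy require substantial new machinery (differential-operator realisations of the socle, or a characteristic-$2$ argument followed by lifting) that your sketch does not reproduce. The paper avoids exactly this difficulty by restricting to PL-spheres, where bistellar connectivity to $\partial\Delta^{d}$ replaces the need for a uniform anisotropy statement and lets one verify WLP move by move. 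So your sketch outlines a valid program for the full conjecture but does not constitute a proof, and it does not recover the paper's argument, which establishes a strictly weaker statement by entirely different combinatorial means.
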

	
	In a very recent preprint \cite{A18}, Adiprasito announced a proof
	of this conjecture on its highest level of generality. More recently, Papadakis and Petrotou \cite{PP20} announced another proof of this conjecture by showing that simplicial spheres have the weak Lefschetz property (see Definition \ref{def:WLP}) in characteristic $2$.
In this paper, we focus on the PL-sphere case, and give a simpler proof of  Papadakis-Petrotou result in this case.
The following theorem is the main result of this paper.
	\begin{thm}\label{thm:wlp thm}
		Every PL-sphere has the weak Lefschetz property over any infinite field of characteristic $2$ or $0$.
	\end{thm}

We remark that the statement for characteristic $0$ is implied by the statement for characteristic $2$. This follows from a general result in \cite[Lemma 2.7]{CN2011}.

The proof is elementary and essentially self-contained. Following McMullen's proof of the Hard Lefschetz theorem for simplicial polytopes, the main idea of the proof of Theorem \ref{thm:wlp thm} is to show that the weak Lefschetz property is preserved by bistellar move operations on PL-spheres. For even dimensional spheres, this is known for all bistellar moves except the middle dimensional bistellar move (see the survey article by Swartz \cite{S09}), which is the case we solve in Section \ref{sec:bistellar}.

	Theorem \ref{thm:wlp thm} has several other applications, one of which is to the Gr\"unbaum-Kalai-Sarkaria conjecture. This conjecture is about the embeddability of a simplicial complex $\Delta$ of dimension $d$ into $\Rr^{2d}$. Note that the dimension $2d$ is the highest nontrivial case in a sense that every $d$-complex embeds into $\Rr^{2d+1}$.
	
	It is well known that if a simple planar graph has $f_0$ vertices and $f_1$ edges, then
	$f_1\leq 3f_0$. Actually Euler's combinatorial formula shows that if $f_0\geq3$ this inequality can be strengthened to $f_1\leq 3f_0-6$ (see e.g. \cite[Lemma 1.3.5]{Gun09}).
	Gr\"unbaum \cite{Gr70} asked if there are generalizations of this inequality for $d$-dimensional simplicial complexes that
	allow PL embedding into $\Rr^{2d}$. Kalai and Sarkaria \cite{Ka91,Sar92} suggested a precise formula:
	\begin{conj}[Gr\"unbaum-Kalai-Sarkaria]\label{conj:B-K-S}
		Let $\Delta$ be a simplicial complex of dimension $d$. If there is a PL embedding $\Delta\hookrightarrow \Rr^{2d}$ then
		\[f_d(\Delta)\leq (d+2)f_{d-1}(\Delta).\]
	\end{conj}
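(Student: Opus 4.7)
The plan is to derive Conjecture \ref{conj:B-K-S} as a consequence of Theorem \ref{thm:wlp thm}, following an approach of Kalai and Sarkaria. First, any PL embedding $\Delta^d \hookrightarrow \Rr^{2d}$ extends to $S^{2d}$ and, by standard PL-topology, can be refined so that $\Delta$ sits as a subcomplex of a PL triangulation $\Sigma$ of $S^{2d}$. The complex $\Sigma$ is then a PL-sphere of simplicial dimension $2d$, to which Theorem \ref{thm:wlp thm} applies.

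Working over a sufficiently general field $\kk$, let $\Theta=(\theta_1,\dots,\theta_{2d+1})$ be a generic linear system of parameters for the Stanley--Reisner ring $\kk[\Sigma]$. The Artinian reduction $A(\Sigma):=\kk[\Sigma]/(\Theta)$ has Hilbert function equal to the $h$-vector $(h_0,\dots,h_{2d+1})$ of $\Sigma$, which is symmetric by the Dehn--Sommerville relations. Theorem \ref{thm:wlp thm} provides a generic linear form $\omega \in A(\Sigma)_1$ such that multiplication $\cdot\omega : A(\Sigma)_i \to A(\Sigma)_{i+1}$ has maximal rank for every $i$; in the range $i \leq d$ this map is injective.

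The combinatorial content of the inclusion $\Delta \subset \Sigma$ is then encoded in a graded subquotient $N_\Delta$ of $A(\Sigma)$, constructed from the squarefree monomials indexed by those $d$-faces of $\Sigma$ that do (or do not) belong to $\Delta$, whose Hilbert dimensions in degrees $d-1$ and $d$ can be written in terms of $f_{d-1}(\Delta)$, $f_d(\Delta)$, and auxiliary $h$-numbers of $\Sigma$. Applying the injectivity of $\cdot\omega$ from degree $d-1$ to degree $d$ to this module, together with a relative inclusion--exclusion argument, will yield a linear inequality between $f_d(\Delta)$ and $f_{d-1}(\Delta)$; the constant $d+2$ is expected to emerge from the combinatorial identities that relate the Hilbert function of $N_\Delta$ to the face numbers.

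The principal obstacle is the third step: producing the correct module $N_\Delta$ so that the WLP-derived inequality translates \emph{precisely} into $f_d(\Delta) \leq (d+2) f_{d-1}(\Delta)$ with the sharp constant. Any cruder choice of $N_\Delta$ would give a weaker coefficient. Overcoming this requires careful bookkeeping of the relative face enumeration of $\Delta$ inside $\Sigma$, and exploits the freedom to jointly optimize the triangulation $\Sigma$, the l.s.o.p.\ $\Theta$, and the Lefschetz element $\omega$.
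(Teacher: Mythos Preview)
Your setup is right: embed $\Delta$ as a subcomplex of a PL $2d$-sphere $K$, take a generic l.s.o.p.\ $\Theta$ and Lefschetz element $\omega$ for $\kk[K]$, and exploit the WLP of $K$. But from there the proposal is only a plan, and you yourself flag the ``principal obstacle'': you do not actually construct the module $N_\Delta$, do not explain why $\cdot\omega$ acts on it, and do not explain why injectivity on $A(\Sigma)$ would transfer to it. As written there is no argument linking the WLP of $K$ to an inequality on face numbers of $\Delta$, so this is a genuine gap rather than a matter of bookkeeping.

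The paper's proof avoids any auxiliary module and works in different degrees. The key observation is that one should look at $\kk[\Delta]/\Theta$ itself (with $\Theta$ restricted to the vertices of $\Delta$), and use \emph{surjectivity} in degrees $d\to d+1$ rather than injectivity in degrees $d-1\to d$. Since $\kk[K]/\Theta\twoheadrightarrow\kk[\Delta]/\Theta$ is surjective and $\cdot\omega:(\kk[K]/\Theta)_d\to(\kk[K]/\Theta)_{d+1}$ is surjective by WLP, the map $\cdot\omega:(\kk[\Delta]/\Theta)_d\to(\kk[\Delta]/\Theta)_{d+1}$ is surjective too. This immediately gives
\[
\dim_\kk(\kk[\Delta]/\Theta)_{d+1}\ \le\ \dim_\kk(\kk[\Delta]/\Theta)_d\ \le\ f_{d-1}(\Delta),
\]
the last inequality because $(\kk[\Delta]/\Theta)_d$ is spanned by squarefree face monomials of degree $d$. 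For the other direction one counts linear relations: with $\Theta$ written so that for each $(d-1)$-face $\sigma$ exactly $d+1$ of the $\theta_i$ avoid the variables of $\sigma$, the products $\theta_i\cdot\xx_\sigma$ give at most $(d+1)f_{d-1}(\Delta)$ relations among the $f_d(\Delta)$ top-degree face monomials, whence
\[
\dim_\kk(\kk[\Delta]/\Theta)_{d+1}\ \ge\ f_d(\Delta)-(d+1)f_{d-1}(\Delta).
\]
Combining the two displays yields $f_d(\Delta)\le(d+2)f_{d-1}(\Delta)$ directly, with no inclusion--exclusion and no optimization over $\Sigma$, $\Theta$, $\omega$. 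The constant $d+2$ falls out of the split $2d+1=d+(d+1)$ of the l.s.o.p., not from any delicate module construction.
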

	As observed by Kalai-Sarkaria, the number $d+2$ in the inequality is the best possible constant in every dimension.
	They also showed that if every even-dimensional PL-sphere has the weak Lefschetz property, then the conjecture is true.
	So Theorem \ref{thm:wlp thm} implies the validity of Conjecture \ref{conj:B-K-S}  (see Theorem \ref{thm:proof BKS-conj}).	
	\section{Preliminaries}
	\renewcommand{\thethm}{\thesection.\arabic{thm}}
	\subsection{Notations and conventions}\label{subsec:notation}
	Throughout this paper, we assume that $\Delta$  is a  simplicial complex, and we use
	$|\Delta|$ to denote the geometric realization of $\Delta$.
	If $\Delta$ has $m$ vertices, we usually identify the vertices of $\Delta$ with $[m]=\{1,\dots,m\}$.
	We refer to $i$-dimensional faces as \emph{$i$-faces}.
	Let $\FF_i(\Delta)$ be the set of $i$-faces  of $\Delta$.
	A simplicial complex is \emph{pure} if all of its facets (maximal faces) have the same dimension.
	By $\Delta^{m-1}$ we denote the simplicial complex consisting of all subsets of $[m]$; its boundary  $\partial\Delta^{m-1}$ will be the subcomplex of all proper subsets of $[m]$. By abuse of notation, sometimes the symbol $\sigma$ will be used ambiguously to denote a face $\sigma\in\Delta$ and also the simplicial complex consisting of $\sigma$ and all its faces.

	The \emph{link}, \emph{star} and \emph{deletion} of a face $\sigma\in\Delta$ are respectively the subcomplexes
	\[\begin{split}
		\mathrm{lk}_\sigma\Delta=&\{\tau\in\Delta:\tau\cup\sigma\in\Delta,\tau\cap\sigma=\emptyset\};\\
		\mathrm{st}_\sigma\Delta=&\{\tau\in\Delta:\tau\cup\sigma\in\Delta\};\\
		\Delta-\sigma=&\{\tau\in\Delta:\sigma\not\subset\tau\}.
	\end{split}\]
	
	The \emph{join} of two simplicial complexes $\Delta$ and $\Delta'$, where the vertex set $\FF_0(\Delta)$ is disjoint from $\FF_0(\Delta')$ , is the simplicial complex
	\[\Delta*\Delta'=\{\sigma\cup\sigma':\sigma\in\Delta, \sigma'\in\Delta'\}.\]
	In particular, we say that $\Delta^0*\Delta$ is the \emph{cone over $\Delta$} and denoted $\mathrm{cone}\,\Delta$.
	
	Let $\Delta$ be a pure simplicial complex of dimension $d$ and $\sigma\in\Delta$ a $(d-i)$-face such that $\mathrm{lk}_\sigma\Delta=\partial\Delta^i$ and the subset $\tau=\FF_0(\Delta^i)\subset \FF_0(\Delta)$ is not a face of $\Delta$. Then the operation $\chi_\sigma$ on $\Delta$ defined by
	\[\chi_\sigma\Delta=(\Delta-\sigma)\cup(\partial\sigma*\Delta^i)\]
	is called a \emph{bistellar $i$-move}. Obviously we have $\chi_\tau\chi_\sigma\Delta=\Delta$. Two pure simplicial complexes  are \emph{bistellarly equivalent}  if one is transformed to another by a finite sequence of bistellar moves.

	A simplicial complex $\Delta$ is called a \emph{triangulated manifold} (or \emph{simplicial manifold}) if  $|\Delta|$ is a topological manifold. More generally, a $d$-dimensional simplicial complex $\Delta$ is a \emph{$\kk$-homology manifold} ($\kk$ is a field) if
	\[H_*(|\Delta|,|\Delta|-x;\kk)=\w H_*(S^{d};\kk)\quad \text{for all }x\in|\Delta|,\]
	or equivalently, \[H_*(\mathrm{lk}_\sigma\Delta;\kk)=H_*(S^{d-|\sigma|};\kk)\quad \text{for all }\emptyset\neq\sigma\in\Delta.\]
	
	$\Delta$ is a \emph{$\kk$-homology $d$-sphere} if it is a $\kk$-homology $d$-manifold with the same $\kk$-homology as $S^d$.
	(Remark: Usually, the terminology ``homology sphere"  means a manifold having the homology of a sphere. Here we take it in a more relaxed sense than its usual meaning.)
	
	A \emph{piecewise linear (PL for short) $d$-sphere} is a simplicial complex which has a common subdivision with $\partial\Delta^{d+1}$. A \emph{PL $d$-manifold} is a simplicial complex $\Delta$ of dimension $d$ such that $\mathrm{lk}_\sigma\Delta$ is a PL-sphere of dimension  $d-|\sigma|$ for every nonempty face $\sigma\in\Delta$.
	It is obvious that two bistellarly equivalent PL-manifolds are PL homeomorphic, i.e., they have a common subdivision. The following fundamental result shows that the converse is also true.
	\begin{thm}[Pachner {\cite[(5.5)]{P91}}]\label{thm:pachner}
		Two PL-manifolds are bistellarly equivalent if and only if they are PL homeomorphic.
	\end{thm}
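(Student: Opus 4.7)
The plan is to prove both directions of Pachner's theorem, with the easy direction handled by a local analysis and the hard direction reduced, via Alexander's classical subdivision theorem, to realizing stellar subdivisions by bistellar moves.

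For the forward direction, I would observe that a single bistellar $i$-move $\chi_\sigma$ replaces $\sigma*\partial\Delta^i$ by $\partial\sigma*\Delta^i$ while leaving the rest of $\Delta$ untouched. These two subcomplexes are precisely the two hemispherical triangulations of the boundary sphere $\partial(\sigma*\Delta^i)$ of a standard $(d+1)$-simplex; in particular they both triangulate the same underlying PL $d$-ball and agree on its boundary $\partial\sigma*\partial\Delta^i$. Consequently $|\Delta|$ and $|\chi_\sigma\Delta|$ admit a common refinement, so they are PL-homeomorphic, and induction on the number of moves gives the implication.

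For the reverse direction, I would invoke Alexander's theorem (two simplicial complexes have PL-homeomorphic realizations if and only if they are connected by a finite sequence of stellar subdivisions and their inverses). It then suffices to prove the key lemma that, for any face $\sigma$ of a PL $d$-manifold $\Delta$, the stellar subdivision $\Delta(\sigma)$ is bistellarly equivalent to $\Delta$. I would proceed by induction on $i := d - \dim\sigma$. The base case $i=0$ is tautological, since stellar subdivision at a facet $\sigma$ satisfies $\mathrm{lk}_\sigma\Delta = \varnothing = \partial\Delta^0$ and the subdivision is precisely the bistellar $0$-move $\chi_\sigma\Delta$. For $i \geq 1$, start by performing a $0$-move $\chi_\tau$ on some facet $\tau\supset\sigma$ in order to introduce the new apex vertex $v$, and then ``migrate'' $v$ across the remaining facets of $\mathrm{st}_\sigma\Delta$ by bistellar moves supported entirely inside this star. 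To orchestrate the migration, apply the inductive hypothesis inside the PL $(i-1)$-sphere $\mathrm{lk}_\sigma\Delta$ to normalize it step-by-step; every bistellar move carried out in the link joins with $\sigma$ to give a legitimate bistellar move in $\Delta$ that preserves the exterior $\Delta\setminus\mathrm{st}_\sigma\Delta$.

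The main obstacle is the bookkeeping in this inductive step: one must check that each intermediate complex satisfies the ``missing face'' hypothesis required for a bistellar move, and that the completed migration sequence actually terminates at $\Delta(\sigma)$ rather than at some other bistellarly equivalent subdivision. An equivalent and in some ways cleaner route, which I would mention as an alternative, is the cobordism method: triangulate $|\Delta|\times[0,1]$ with boundary components $\Delta$ and $\Delta'$, then order its interior $(d+1)$-simplices into a shelling so that each successive removal replaces a face $\rho$ on the current boundary by its complement in $\partial F$, which is exactly a bistellar move on the moving boundary. Both approaches ultimately reduce to a careful combinatorial analysis of shellings of PL $(d+1)$-balls, which is where the genuine technical content of Pachner's theorem resides.
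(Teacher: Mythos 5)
The paper does not prove this result; it cites Pachner's theorem from \cite{P91} and uses it as a black box, so there is no in-paper proof against which to compare your attempt.

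On its own terms, your sketch identifies the right framework but leaves the essential content of the theorem unresolved. The forward direction is fine: $\sigma*\partial\Delta^i$ and $\partial\sigma*\Delta^i$ triangulate the same PL $d$-ball and agree on $\partial\sigma*\partial\Delta^i$, so a single bistellar move preserves the PL type and the rest follows by induction on the number of moves. The real difficulty is the reverse direction, and your reduction via Alexander's stellar theorem to the key lemma ``every stellar subdivision of a PL-manifold is bistellarly equivalent to it'' is indeed the standard strategy. But the inductive proof of that lemma is where the whole theorem lives, and as written it has two genuine gaps. First, the induction cannot run only on the codimension $i=d-\dim\sigma$ of the subdivided face: to ``normalize'' the link $\mathrm{lk}_\sigma\Delta$ you need the theorem already available for PL-spheres of dimension $i-1$, so a nested induction on the dimension of the ambient manifold is unavoidable. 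Second, the ``migration'' of the apex $v$ across the facets of $\mathrm{st}_\sigma\Delta$ needs a concrete mechanism to guarantee that each intermediate complex admits a legal bistellar move satisfying the missing-face hypothesis; in practice this is furnished by a shelling order on the relevant ball (or, in Pachner's own argument, on a triangulated cobordism), and producing and verifying such an order is not mere bookkeeping --- it is the theorem. The cobordism/shelling route you mention at the end as an alternative is in fact the path Pachner himself takes and is probably the cleaner way to a complete proof.
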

	
	As we have seen in Section \ref{sec:introduction}, the $g$-theorem relates the $g$-vectors of \emph{polytopal spheres} to $M$-sequences.
	The importance of $M$-sequence comes from the
	following fundamental result of Macaulay in commutative algebra.
	\begin{thm}[Macaulay {\cite{Mac27}}, {\cite[Theorem I.4.2.10]{BH98}}]\label{thm:m-vector}
		Let $(k_0,k_1,k_2,\dots)$ be a sequence of nonnegative integers. Then  $(k_0,k_1,k_2,\dots)$ is the Hilbert function of a homogeneous quotient of a polynomial ring if and only if $k_0=1$ and $0\leq k_{i+1} \leq k_{i}^{\langle i\rangle}$ for all $i\geq1$.
	\end{thm}
	
	\subsection{Face rings and l.s.o.p}\label{subsec:l.s.o.p.}
	For a commutative ring $\kk$ with unit, let $\kk[x_1,\dots,x_m]$ be the polynomial algebra with one generator for each
	vertex in $\Delta$. It is a graded algebra by setting $\deg x_i=1$.
	The \emph{Stanley-Reisner ideal} of $\Delta$ is
	\[I_\Delta:=(x_{i_1}x_{i_2}\cdots x_{i_k}:\{i_1,i_2,\dots,i_k\}\not\in\Delta)
	\]
	The \emph{Stanley-Reisner ring} (or \emph{face ring}) of $\Delta$ is the quotient \[\kk[\Delta]:=\kk[x_1,\dots,x_m]/I_\Delta.\]
	Since $I_\Delta$ is a monomial ideal, the quotient ring $\kk[\Delta]$ is graded by degree.
	
	For a face $\sigma=\{i_1,\dots,i_k\}\in\FF_{k-1}(\Delta)$, denote by $\xx_\sigma=x_{i_1}\cdots x_{i_k}\in\kk[\Delta]$ the face monomial corresponding to $\sigma$.
	
	Assuming $\kk$ is a field, a set $\Theta=\{\theta_1,\dots,\theta_d\}$ consisting of $d=\dim\Delta+1$ linear forms in $\kk[\Delta]$ is called an \emph{l.s.o.p.} (linear
	system of parameters), if
	\[\kk(\Delta;\Theta):=\kk[\Delta]/(\Theta)\]
	has Krull dimension zero, i.e., it is a finite-dimensional $\kk$-space. We will use the simplified notation $\kk(\Delta)$ for $\kk(\Delta;\Theta)$ whenever it creates no confusion, and write the component
	of degree $i$ of $\kk(\Delta)$ as $\kk(\Delta)_i$. For a subcomplex $\Delta'\subset\Delta$, let $I$ be the ideal of $\kk[\Delta]$ generated by faces in $\Delta\setminus\Delta'$, and denote $I/I\Theta$ by $\kk(\Delta,\Delta';\Theta)$ or simply $\kk(\Delta,\Delta')$.
	
	A linear sequence $\theta_1,\dots,\theta_d$ is an l.s.o.p if and only if the restriction $\Theta_\sigma=r_\sigma(\Theta)$ to each face $\sigma\in\Delta$ generates the polynomial algebra $\kk[x_i:i\in\sigma]$; here $r_\sigma:\kk[\Delta]\to\kk[x_i:i\in\sigma]$ is the projection homomorphism (see \cite[Theorem II.5.1.16]{BH98}). 
	
	It is a general fact that if $\kk$ is an infinite field, then $\kk[\Delta]$ admits an l.s.o.p. (Noether normalization lemma). If $\Theta$ is an l.s.o.p. for $\kk[\Delta]$, then $\kk(\Delta)$ is spanned by the face monomials (see \cite[Lemma III.2.4]{S96}). 
	
	Suppose $\Theta=\{\theta_i=\sum_{j=1}^m a_{ij}x_j\}_{i=1}^d$ is an l.s.o.p. for $\kk[\Delta]$. Then there is an associated $d\times m$ matrix $M_\Theta=(a_{ij})$.
Let $\boldsymbol{\lambda}_i=(a_{1i},a_{2i},\dots,a_{di})^T$ denote the column vector corresponding to the vertex $i\in[m]$. For any ordered subset $S=(i_1,\dots,i_k)\subset [m]$, the submatrix $M_\Theta(S)$ of $M_\Theta$ is defined to be
\[M_\Theta(S)=(\boldsymbol{\lambda}_{i_1},\dots,\boldsymbol{\lambda}_{i_k}).\]
	
	\subsection{Cohen-Macauley and Gorenstein complexes} In this subsection we review some basic combinatorial and algebraic concepts used in the rest of this paper. Throughout this subsection, $\kk$ is an infinite field of arbitrary characteristic.
	
	Let $\Delta$ be a simplicial complex of dimension $d-1$. The face ring $\kk[\Delta]$ is a \emph{Cohen-Macaulay ring} if for any l.s.o.p $\Theta=\{\theta_1,\dots,\theta_d\}$, $\kk(\Delta)$ is a free $\kk[\theta_1,\cdots,\theta_d]$ module. In this case, $\Delta$ is called a \emph{Cohen-Macaulay complex over $\kk$}.
	
	If $\Delta$ is Cohen-Macaulay, the following result of Stanley shows that the $h$-vector of $\Delta$ has a pure algebraic description.
	\begin{thm}[Stanley \cite{Sta75}]\label{thm:stanley}
		Let $\Delta$ be a $(d-1)$-dimensional Cohen-Macaulay complex and let $\Theta=\{\theta_1,\dots,\theta_d\}$ be an l.s.o.p. for $\kk[\Delta]$. Then
		\[\dim_\kk\kk(\Delta)_i=h_i(\Delta),\quad \text{for all } 0\leq i\leq d.\]
	\end{thm}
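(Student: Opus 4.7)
The plan is to pass through Hilbert series. Writing $F(A,t)=\sum_i (\dim_\kk A_i)\, t^i$ for a graded $\kk$-algebra $A$ with finite-dimensional homogeneous components, the strategy has two computational steps which are then compared.

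First, I would compute $F(\kk[\Delta],t)$ directly from the Stanley-Reisner presentation. A $\kk$-basis for $\kk[\Delta]$ consists of monomials whose support is a face of $\Delta$; grouping by support gives
\[F(\kk[\Delta],t)=\sum_{\sigma\in\Delta}\left(\frac{t}{1-t}\right)^{|\sigma|}=\sum_{i=-1}^{d-1}\frac{f_i\,t^{i+1}}{(1-t)^{i+1}}.\]
A purely algebraic rearrangement, using the definition of the $h$-vector recalled in \S\ref{sec:introduction}, converts this to
\[F(\kk[\Delta],t)=\frac{h_0+h_1t+\cdots+h_dt^d}{(1-t)^d}.\]
This step uses nothing about $\Delta$ beyond the definition of the face ring.

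Next, I would invoke the Cohen-Macaulay hypothesis to factor $\Theta$ out. By definition, $\kk[\Delta]$ is a free graded module over $\kk[\theta_1,\dots,\theta_d]$; since the $\theta_i$ are algebraically independent degree-one elements (being an l.s.o.p.), this subring is a polynomial ring in $d$ variables with Hilbert series $(1-t)^{-d}$. Choosing a homogeneous free basis of $\kk[\Delta]$ over $\kk[\Theta]$ and reducing modulo $\Theta$ produces a graded vector space isomorphism $\kk[\Delta]\cong\kk[\Theta]\otimes_\kk(\kk[\Delta]/\Theta)$, hence the multiplicativity of Hilbert series
\[F(\kk[\Delta],t)=\frac{F(\kk[\Delta]/\Theta,t)}{(1-t)^d}.\]
Comparing with the first computation yields $F(\kk[\Delta]/\Theta,t)=\sum_{i=0}^d h_i(\Delta)\,t^i$, which is exactly the claimed dimension formula.

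The main obstacle is the factorization used in the second step: one must know that on a Cohen-Macaulay face ring an l.s.o.p. behaves as a regular sequence and admits a \emph{homogeneous} free basis, so that the Hilbert series genuinely factor as a product. This is the bridge between the algebraic definition of Cohen-Macaulayness and its numerical consequence, and is the only place where the hypothesis on $\Delta$ actually enters. Everything else is formal manipulation of power series together with the combinatorial definition of the $h$-vector.
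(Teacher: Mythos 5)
Your proof is correct and follows the same route the paper indicates: the paper cites the identity $F(\kk[\Delta],\lambda)=(h_0+\cdots+h_d\lambda^d)/(1-\lambda)^d$ from Stanley's book and calls the theorem an immediate consequence, which is exactly your combination of the combinatorial Hilbert-series computation with the Cohen--Macaulay factorization $F(\kk[\Delta],t)=F(\kk[\Delta]/\Theta,t)\cdot(1-t)^{-d}$. You merely spell out the two ingredients that the paper leaves as citations, including the (correct) observation that a homogeneous free basis over $\kk[\Theta]$ is what makes the Hilbert series multiplicative.
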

	
    Let $M$ be a finitely-generated graded $\kk[x_1,\dots,x_m]$-module. The \emph{socle} of $M$ is the following graded submodule of $M$
	\[\mathrm{Soc}\,M:=\{a\in M:x_i\cdot a=0\text{ for all } i\}.\]
	The face ring $\kk[\Delta]$ is a \emph{Gorenstein ring} if $\dim_\kk\mathrm{Soc}\,\kk(\Delta)=1$ for any l.s.o.p $\Theta$, or in other words, $\kk(\Delta)$ is a Poincar\'e duality $\kk$-algebra.
	In this case, $\Delta$ is called a \emph{Gorenstein complex over $\kk$}.
	Further, $\Delta$ is called \emph{Gorenstein*} if $\kk[\Delta]$ is Gorenstein and $\Delta$ is not a cone, i.e., $\Delta\neq\mathrm{cone}\,\Delta'$ for any $\Delta'$.
	
	These algebraic properties of face rings have combinatorial-topological characterisations as follows.
	\begin{thm}\label{thm:algebraic property}
		Let $\Delta$ be a simplicial complex. Then
		\begin{enumerate}[(a)]
			\item {\rm(Reisner \cite{Rei76})} $\Delta$ is Cohen-Macaulay (over $\kk$) if and only if for all faces $\sigma\in\Delta$ (including $\sigma=\emptyset$)
			and $i<\dim\mathrm{lk}_\sigma\Delta$, we have $\w H_i(\mathrm{lk}_\sigma\Delta;\kk)=0$.\vspace{8pt}
			
			\item {\rm(Stanley \cite[Theorem II.5.1]{S96})} $\Delta$ is Gorenstein* (over $\kk$) if and only if it is a $\kk$-homology sphere.\vspace{8pt}
		\end{enumerate}
	\end{thm}
	
	\subsection{Strong and weak Lefschetz property}\label{sec:lefschetz}
	Both Stanley's and McMullen's proof the necessity of the $g$-theorem is by proving the following theorem.
	\begin{thm}[\cite{S80},\cite{Mc93}]\label{thm:Lefschetz}
		If $\Delta$ is the boundary of a simplicial $d$-polytope, then for a certain l.s.o.p. $\Theta$ of $\Qq[\Delta]$, there exists a linear form $\omega$ in $\Qq[\Delta]$ such that the multiplication map
		\[\cdot\omega^{d-2i}:\Qq(\Delta)_{i}\to\Qq(\Delta)_{d-i}\]
		is an isomorphism for all $i\leq d/2$.
	\end{thm}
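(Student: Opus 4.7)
The plan is to follow Stanley's toric-variety approach: realize $\Qq[\Delta]/\Theta$ as the rational cohomology of a projective toric variety, identify the desired Lefschetz element $\omega$ with an ample (K\"ahler) class, and then invoke the Hard Lefschetz theorem for K\"ahler orbifolds. The first move is a rationality reduction: the statement only requires \emph{some} l.s.o.p.\ $\Theta$ and \emph{some} Lefschetz form $\omega$, and the face ring $\Qq[\Delta]$ depends only on the combinatorial type of $\Delta = \partial P$. Since being a simplicial $d$-polytope with prescribed face lattice is an open condition on the vertex coordinates, one may replace $P$ by a combinatorially equivalent \emph{rational} simplicial polytope $P'$ without changing $\Delta$.

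From here I would associate to $P'$ its inner normal fan $\Sigma$ and the corresponding projective toric variety $X=X(\Sigma)$. Because $P'$ is rational and simplicial, $\Sigma$ is a rational simplicial fan and $X$ is a projective $\Qq$-factorial toric variety, i.e., a rational orbifold of complex dimension $d$. Let $v_1,\dots,v_m\in\Qq^d$ be the primitive ray generators of $\Sigma$ (one per vertex of $\Delta$), and set $\theta_i=\sum_{j=1}^{m}(v_j)_i\,x_j$ for $i=1,\dots,d$; then $\Theta=\{\theta_1,\dots,\theta_d\}$ is an l.s.o.p.\ for $\Qq[\Delta]$. The Jurkiewicz-Danilov theorem supplies a natural graded-ring isomorphism $\Qq[\Delta]/\Theta\cong H^{2*}(X;\Qq)$ with odd cohomology vanishing, and the dimensions on each side match the $h$-vector (compatibly with Theorem \ref{thm:stanley}). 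The polytope $P'$ itself defines an ample $\Qq$-Cartier divisor on $X$ whose cohomology class $\omega\in H^2(X;\Qq)$ pulls back to a linear form in $(\Qq[\Delta]/\Theta)_1$. Applying the Hard Lefschetz theorem to the K\"ahler class $\omega$ on the projective orbifold $X$ yields the desired isomorphism $\omega^{d-2i}\colon H^{2i}(X;\Qq)\to H^{2d-2i}(X;\Qq)$ for every $i\le d/2$, which translates under Jurkiewicz-Danilov to the statement of the theorem.

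The main obstacle is justifying Hard Lefschetz for the in-general singular variety $X$. On a smooth projective variety this is classical Hodge theory; for our $\Qq$-factorial $X$ one has to either exploit the orbifold ($V$-manifold) K\"ahler structure so that the standard Hodge-theoretic proof applies with $\Qq$-coefficients, or else pass to an equivariant smooth toric resolution $\pi\colon \w X\to X$ and deduce Hard Lefschetz on $H^*(X;\Qq)$ from the smooth case---for instance using that $\pi^*$ is injective because $X$ has only finite quotient singularities, combined with the projection formula to transfer the pairing. Setting up these foundational cohomological inputs and verifying their compatibility with the Jurkiewicz-Danilov presentation---together with checking that the ample class built from $P'$ really is K\"ahler in the orbifold sense---is where the real technical work of the proof lies.
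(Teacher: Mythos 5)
The paper does not supply a proof of this theorem; it is quoted as a known background result and cited to Stanley \cite{S80} and McMullen \cite{Mc93}. Your proposal is a faithful reconstruction of Stanley's toric-variety argument from \cite{S80}: perturb $P$ to a combinatorially equivalent rational simplicial polytope, form the normal fan and the associated projective $\Qq$-factorial toric variety $X$, identify $\Qq[\Delta]/\Theta$ with $H^{2*}(X;\Qq)$ via Jurkiewicz--Danilov (the l.s.o.p.\ $\Theta$ being determined by the ray generators), take $\omega$ to be the ample class coming from the polytope, and invoke Hard Lefschetz. The pressure points you identify are the right ones: the combinatorial-to-rational perturbation (unproblematic for simplicial polytopes, since simpliciality is an open condition), and especially Hard Lefschetz on the generally singular orbifold $X$, which is the part that was historically delicate and is now handled either via orbifold Hodge theory, or by noting that $X$ is a rational homology manifold so that $H^*(X;\Qq)\cong IH^*(X;\Qq)$ and Hard Lefschetz holds for intersection cohomology. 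For contrast, the other cited route, McMullen's \cite{Mc93,Mc96}, avoids toric geometry altogether and works inside the polytope algebra using purely convex-geometric arguments (mixed volumes and the Alexandrov--Fenchel inequality), which has the advantage of sidestepping all the orbifold Hodge-theoretic foundations you correctly flag as the hard part of Stanley's route.
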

	
	Let us see why Theorem \ref{thm:Lefschetz} implies the $g$-theorem.
	If $\omega$ satisfies the condition in Theorem \ref{thm:Lefschetz}, then the map $\cdot\omega:\Qq(\Delta)_{i}\to \Qq(\Delta)_{i+1}$ is obviously injective for all $i< d/2 $. This, together with Theorem \ref{thm:stanley}, implies that the quotient ring $\Qq(\Delta)/(\omega)$ is a connected commutative graded $\Qq$-algebra whose $i$th graded component has dimension
	$g_i(\Delta)=h_i(\Delta)-h_{i-1}(\Delta)$ for all $i\leq\lfloor d/2 \rfloor$.
	Hence the $g$-vector of $\Delta$ is an $M$-vector by Theorem \ref{thm:m-vector}.
	
	Let $\Delta$ be a $\kk$-homology sphere. We say $\Delta$ has the \emph{strong Lefschetz property} over $\kk$ if there exists an l.s.o.p. $\Theta$ for $\kk[\Delta]$ and a linear form $\omega$ such that the multiplication map
	\[\cdot\omega^{d-2i}:\kk(\Delta)_{i}\to\kk(\Delta)_{d-i}\]
	is an isomorphism for all $i\leq d/2$.
	\begin{conj}[Algebraic $g$-conjecture]\label{conj:Lefschetz}
	If $\kk$ is an infinite field, then	every $\kk$-homology sphere has the strong Lefschetz property.
	\end{conj}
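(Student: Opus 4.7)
The plan is to attack the algebraic $g$-conjecture by building on Theorem \ref{thm:wlp thm} and bridging the two gaps that separate it from the full conjecture: (i) extending WLP from PL-spheres to arbitrary rational homology spheres, and (ii) strengthening WLP to SLP. Throughout, I would work with a \emph{generic} l.s.o.p.\ $\Theta$ and a \emph{generic} linear form $\omega$, and exploit Poincar\'e duality on $\Qq[\Delta]/\Theta$, which is available since a rational homology sphere is Gorenstein* by Theorem \ref{thm:algebraic property}(b).

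For the first gap, I would induct on $d=\dim\Delta+1$. The link of every nonempty face of a rational homology sphere is again a rational homology sphere of strictly smaller dimension, so the inductive hypothesis would supply SLP (and in particular WLP) on every proper link. To transfer this local information to $\Delta$, I would study, for each vertex $v$, the four-term exact sequence
\[
0\to (0:_{\Qq[\Delta]/\Theta}\, x_v)(-1)\to \Qq[\Delta]/\Theta \xr{\,\cdot x_v\,} \Qq[\Delta]/\Theta \to \Qq[\mathrm{st}_v\Delta]/\Theta\to 0,
\]
together with the identification of $\Qq[\mathrm{st}_v\Delta]/\Theta$ with a graded piece of the face ring of $\mathrm{lk}_v\Delta$. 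Arguing as in the proof of Theorem \ref{thm:wlp thm}, one would try to choose $\omega$ so that the local Lefschetz isomorphisms on the links assemble into global injectivity of $\cdot\omega\colon(\Qq[\Delta]/\Theta)_{\lfloor d/2\rfloor-1}\to(\Qq[\Delta]/\Theta)_{\lfloor d/2\rfloor}$, which, by Poincar\'e duality, gives WLP on $\Delta$.

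For the second gap, I would upgrade WLP to SLP by iterating the construction. Once WLP is known for all rational homology spheres, one can form the Artinian quotient $\Qq[\Delta]/(\Theta,\omega)$; when $\Delta$ is Gorenstein*, this is again a Poincar\'e duality algebra of socle degree $d-2$, and its ``underlying geometry'' is morally the link of a generic hyperplane section. If I can show that this quotient itself inherits a Lefschetz element (say from $\omega$ itself, using that $\omega$ is generic), then iterating gives that $\cdot\omega^{d-2i}\colon(\Qq[\Delta]/\Theta)_i\to(\Qq[\Delta]/\Theta)_{d-i}$ is an isomorphism for every $i\leq d/2$. The usual way to effect this iteration is to combine WLP with a surjectivity statement for the multiplication $\cdot\omega^{d-2i-2}$ on $\Qq[\Delta]/(\Theta,\omega)$, which follows inductively.

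The main obstacle is the first stage. Pachner's theorem (Theorem \ref{thm:pachner}) reduces the PL case to a finite sequence of bistellar moves starting from $\partial\Delta^d$, and this combinatorial scaffolding is exactly what makes Theorem \ref{thm:wlp thm} accessible: one can propagate WLP across each bistellar $i$-move by an explicit algebraic calculation. No such reduction exists for general rational homology spheres, which can fail to be PL (or even topological manifolds, by the double-suspension phenomenon). Any proof must therefore either isolate an algebraic invariant of the face ring that depends only on the Gorenstein* property and propagates WLP intrinsically, or else find a fundamentally new attack on Lefschetz that bypasses combinatorial simplification altogether. Overcoming this is the reason the statement remains a conjecture even after Theorem \ref{thm:wlp thm} is established.
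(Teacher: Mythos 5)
The statement you are attempting to prove is labeled \emph{Conjecture}~\ref{conj:Lefschetz} in the paper, and the paper does not prove it; it merely records Adiprasito's announcement of a solution and then proves the strictly weaker Theorem~\ref{thm:WLP of pl sphere} (WLP for PL-spheres). So there is no ``paper's own proof'' to compare against, and the correct answer here is that the statement remains open within the scope of this paper. To your credit, your proposal recognizes this: you lay out an attack and then explicitly flag that the first gap --- transferring WLP from PL-spheres to arbitrary rational homology spheres --- has no available analogue of Pachner's theorem and is the genuine obstruction. That assessment matches the paper's own framing.

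Two technical cautions on the sketch itself, should you try to push it further. First, your stage~(ii) asserts that $\Qq[\Delta]/(\Theta,\omega)$ is again a Poincar\'e duality algebra of socle degree $d-2$. This is not correct in general: for a Gorenstein Artinian algebra $A=\Qq[\Delta]/\Theta$ of socle degree $d$ with a Lefschetz element $\omega$, the quotient $A/\omega A$ has Hilbert function $(g_0,g_1,\dots,g_{\lfloor d/2\rfloor},0,\dots)$, whose socle sits entirely in degree $\lfloor d/2\rfloor$; it is not symmetric and not Poincar\'e duality. The correct Gorenstein quotient to iterate on is $A/(0:_A\omega)$, which has socle degree $d-1$, or one must pass to primitive parts. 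Second, the inference from WLP to SLP is not automatic even for Gorenstein* Artinian reductions; there are Gorenstein Artinian algebras with WLP and without SLP, so the iteration in stage~(ii) requires a genuinely new input rather than formal bootstrapping. Both points reinforce the conclusion that your proposal is a roadmap with acknowledged gaps rather than a proof, which is the honest state of affairs.
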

	Rencently, Adiprasito posted a highly technical preprint \cite{A18} claiming to prove Conjecture \ref{conj:Lefschetz}. More rencently, Papadakis and Petrotou  \cite{PP20} posted another purely algebraic proof of Conjecture \ref{conj:Lefschetz} for the case that $\kk$ has characteristic $2$. The idea in  Papadakis-Petrotou proof is to view the entries $a_{ij}$ of $M_\Theta$ as variables and use a partial differential operator defined on the field of fractions of the polynomial ring
	\[\kk[a_{ij}:1\leq i\leq d,\,1\leq j\leq m].\]
	Our proof of Theorem \ref{thm:wlp thm} is inspired by this idea.

In fact, to prove the $g$-conjecture, it is enough to prove the following weaker property holds.
	\begin{Def}\label{def:WLP}
		Let $\Delta$ be a Cohen-Macaulay complex (over $\kk$). We say that $\Delta$  has the \emph{weak Lefschetz property} (WLP)
		if there is an l.s.o.p. $\Theta$ for $\kk[\Delta]$ and a linear form
		$\omega$ such that the multiplication map $\cdot\omega:\kk(\Delta)_{i}\to\kk
		(\Delta)_{i+1}$ is either
		injective or surjective for all $i$. Such a linear form $\omega$ is called a \emph{weak Lefschetz element} (WLE).
	\end{Def}
	
	\begin{prop}\label{prop:WLP}
		Let $\Delta$ be a $\kk$-homology $(d-1)$-sphere, then $\omega$  is a WLE if the multiplication map
		$\kk(\Delta)_{\lfloor d/2 \rfloor}\xr{\cdot\omega}\kk(\Delta)_{\lfloor d/2 \rfloor+1}$ is surjective.
	\end{prop}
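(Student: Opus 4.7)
The plan is to exploit two structural features of $A := \kk[\Delta]/\Theta$: the Poincar\'e duality guaranteed by the Gorenstein$^*$ hypothesis via Theorem~\ref{thm:algebraic property}(b), together with the fact that $A$ is generated in degree one (inherited from the face ring, which is a quotient of a polynomial ring). Recall also that by the Dehn-Sommerville relations $\dim_\kk A_i=h_i=h_{d-i}=\dim_\kk A_{d-i}$.

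First, I would convert the surjectivity hypothesis into a vanishing statement. Setting $B:=A/(\omega)$, the cokernel sequence $A_i \xr{\cdot\omega} A_{i+1}\to B_{i+1}\to 0$ shows that the assumption is equivalent to $B_{\lfloor d/2\rfloor+1}=0$. Since $A$ — and hence $B$ — is generated as a $\kk$-algebra by its degree-one piece, a single vanishing $B_j=0$ forces $B_k = A_1^{\,k-j}\cdot B_j = 0$ for every $k\geq j$. Therefore $B_k=0$ for all $k>\lfloor d/2\rfloor$, which translates back to the surjectivity of $\cdot\omega:A_i\to A_{i+1}$ for every $i\geq\lfloor d/2\rfloor$.

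Next, I would invoke Poincar\'e duality. Since $A$ is Artinian Gorenstein with socle in degree $d$, the pairing $A_i\otimes A_{d-i}\to A_d\cong \kk$ is perfect, and the associativity identity $\langle \omega x,y\rangle=\langle x,\omega y\rangle$ for $x\in A_i$, $y\in A_{d-i-1}$ shows that the two multiplication maps $\cdot\omega:A_i\to A_{i+1}$ and $\cdot\omega:A_{d-i-1}\to A_{d-i}$ are transposes of each other under the induced duality $A_{i+1}\cong A_{d-i-1}^{*}$. Hence one is injective if and only if the other is surjective. For any $i<\lfloor d/2\rfloor$ the dual index satisfies $d-i-1\geq \lfloor d/2\rfloor$, so the surjectivity established in the first step forces injectivity of $\cdot\omega$ at degree $i$. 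Combining the two ranges yields the condition of Definition~\ref{def:WLP}.

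I do not expect any serious obstacle: the argument is essentially formal once the Gorenstein/Poincar\'e-duality structure and degree-one generation are in hand. The only thing that needs a careful check is the index bookkeeping $d-i-1\geq \lfloor d/2\rfloor \iff i\leq \lceil d/2\rceil-1$, verified separately for even and odd $d$, to confirm that the surjective range in degrees $\geq\lfloor d/2\rfloor$ is exactly the Poincar\'e dual of the injective range in degrees $<\lfloor d/2\rfloor$.
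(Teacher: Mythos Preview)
Your proof is correct and follows essentially the same approach as the paper's: first use degree-one generation to propagate the vanishing of $A/(\omega)$ in degrees $>\lfloor d/2\rfloor$, then invoke Poincar\'e duality to convert surjectivity in high degrees into injectivity in low degrees. Your write-up is in fact more explicit than the paper's about the transpose relationship under the duality pairing and the index verification.
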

	\begin{proof}
		Suppose the multiplication map $\kk(\Delta)_{\lfloor d/2 \rfloor}\xr{\cdot\omega}\kk(\Delta)_{\lfloor d/2 \rfloor+1}$ is surjective.
		Then $(\kk(\Delta)/(\omega))_{\lfloor d/2 \rfloor+1}=0$. This implies that $(\kk(\Delta)/(\omega))_{i}=0$ for all $i\geq \lfloor d/2 \rfloor+1$,
		and therefore $\kk(\Delta)_{i}\xr{\cdot\omega}\kk(\Delta)_{i+1}$ is surjective for all $i\geq\lfloor d \rfloor$. Since $\Delta$ is Gorenstein,
		$\kk(\Delta)$ is a Poincar\'e duality algebra. Namely, there is an nondegenerate bilinear paring
		\[\kk(\Delta)_{j}\times\kk(\Delta)_{d-j}\to\kk(\Delta)_{d}=\kk,\quad (a,b)\mapsto ab.\]
		It follows that $\kk(\Delta)_{i}\xr{\cdot\omega}\kk(\Delta)_{i+1}$ is injective for $i<\lfloor d/2 \rfloor$.
	\end{proof}
	
	We define a set of pairs $\WW(\Delta)\subset \kk^{f_0}\oplus\kk^{df_0}$ to be
	\[\WW(\Delta)=\{(\omega,\Theta):\Theta\text{ is an l.s.o.p. for }\kk[\Delta]\text{ and }\omega \text{ is a WLE }\}.\]
	It is well known that $\WW(\Delta)$ is a Zariski open set (see e.g. \cite[Proposition 3.6]{Swa06}).
	We will use the term \emph{`generic choice'} of $\Theta$ or $\omega$ to mean that these elements are chosen from a non-empty Zariski open set.

\subsection{A criterion of WLP for even spheres} In this subsection we recall some result about the \emph{canonical module} (see \cite[I.12]{S96} for the definition) of $\kk[\Delta]$ when $\Delta$ is a homology ball, and see how it relates to the WLP of the boundary sphere of $\Delta$ for the case that $\dim\Delta$ is odd.

Let $\Delta$ be a $\kk$-homology $(d-1)$-ball with boundary $\partial \Delta$. Then there is an exact sequence
\begin{equation}\label{eq:exact}
	0\to I\to \kk[\Delta]\to\kk[\partial\Delta]\to 0,
	\end{equation}
where $I$ is the ideal of $\kk[\Delta]$ generated by all faces in $\Delta\setminus\partial\Delta$. By a theorem of Hochster \cite[Theorem  II.7.3]{S96} $I$ is the canonical module of $\kk[\Delta]$. Then from \cite[Theorem I.3.3.4 (d) and Theorem I.3.3.5 (a)]{BH98} we have the following lemma.
\begin{lem}\label{lem:module}
	Let $\Delta$ be a $\kk$-homology $(d-1)$-ball, and $\Theta$ be an l.s.o.p. for  $\kk[\Delta]$. Then there is a nondegenerate bilinear pairing
	\[
		\kk(\Delta)_i\times \kk(\Delta,\partial\Delta)_{d-i}\to \kk(\Delta,\partial\Delta)_d=\kk.
    \]
\end{lem}


\begin{prop}\label{prop:WLP criterion}
	Let $\Delta$ be a $\kk$-homology $(2n-1)$-ball, and $\Theta$ be a generic l.s.o.p for $\kk[\Delta]$. Then the following  are equivalent.
	\begin{enumerate}[(a)]
		\item $\partial\Delta$ has the WLP.
		\item The natural map $\phi:\kk(\Delta,\partial\Delta)_n\to \kk(\Delta)_n$ is an isomorphism.
	\end{enumerate}
\end{prop}
\begin{proof}
Since $\Theta=\{\theta_1,\dots,\theta_{2n}\}$ is generic, we may assume that $\Theta_0=\{\theta_1,\dots,\theta_{2n-1}\}$ is an l.s.o.p. for $\kk[\partial\Delta]$.
 The exact sequence \eqref{eq:exact} induces an exact sequence:
 \[0\to I/(I\cap\Theta)\to \kk(\Delta;\Theta)\to\kk[\partial\Delta]/\Theta\to 0.\]
	Note that $\phi$ factors through the surjection $\kk(\Delta,\partial\Delta):=I/I\Theta\to I/(I\cap\Theta)$, and $\dim_\kk \kk(\Delta,\partial\Delta)_n=\dim_\kk\kk(\Delta,\partial\Delta)_n$ by Lemma \ref{lem:module}.
	Thus $\phi$ is an isomorphism $\Longleftrightarrow$  $(\kk[\partial\Delta]/\Theta)_n=0$ $\Longleftrightarrow$
the multiplication map $\kk(\partial\Delta;\Theta_0)_{n-1}\xr{\cdot\theta_{2n}}\kk(\partial\Delta;\Theta_0)_n$ is an isomorphism  $\Longleftrightarrow$  $(\theta_{2n},\Theta_0)\in\WW(\partial\Delta)$.
\end{proof}

\begin{rem}\label{rem:boundary}
	From the proof of Proposition \ref{prop:WLP criterion} we know that the conditions in Proposition \ref{prop:WLP criterion} only depend on the restriction of $\Theta$ to $\partial\Delta$. 
	In other word, if $\Theta'$ is another l.s.o.p. for $\kk[\Delta]$ such that $M_\Theta(\VV(\partial\Delta))=M_{\Theta'}(\VV(\partial\Delta))$, then the conditions in Proposition \ref{prop:WLP criterion} hold for $\Theta$ if and only if they hold for $\Theta'$. 
\end{rem}

\subsection{The canonical function} In this subsection, we recall a useful result in \cite{PP20}.

\begin{lem}[{\cite[Corollary 4.5]{PP20}}]\label{lem:generator}
	Let $\Delta$ be a $(d-1)$-dimensional $\kk$-homology sphere or ball, $\Theta$ be an l.s.o.p. for $\kk[\Delta]$.  Assume $\sigma_1$ and $\sigma_2$ are two ordered facets of $\Delta$, which have the same orientation in $\Delta$. Then \[\det(M_\Theta(\sigma_1))\xx_{\sigma_1}=\det(M_\Theta(\sigma_2))\xx_{\sigma_2}\] in $\kk(\Delta)_d$ or $\kk(\Delta,\partial\Delta)_d$ respectively. 
\end{lem}

When $\Delta$ is a $(d-1)$-dimensional $\kk$-homology sphere or ball, $\kk(\Delta)_d$ or $\kk(\Delta,\partial\Delta)_d$ is $\kk$, which is spanned by a facet. So each facet $\sigma\in\Delta$ defines a map \[\Psi_\sigma:\kk(\Delta)_d\text{ or } \kk(\Delta,\partial\Delta)_d\to \kk\]
such that for all $f$ in $\kk(\Delta)_d$ or $\kk(\Delta,\partial\Delta)_d$,
\[f=\Psi_\sigma(f)\det(M_\Theta(\sigma))\xx_{\sigma}.\]
Lemma \ref{lem:generator} says that $\Psi_\sigma=\pm\Psi_\tau$ for  any two facets $\sigma,\tau\in\Delta$.
Particullarly, if we fix an orientaion on $\Delta$, this map is independent of the choice of the oriented facet, giving a \emph{canonical function} $\Psi_\Delta:\kk(\Delta)_d\text{ or } \kk(\Delta,\partial\Delta)_d\to \kk$ (see \cite[Remark 4.6]{PP20}).

\section{WLP and bistellar moves}\label{sec:bistellar}
	According to Pachner's theorem (Theorem \ref{thm:pachner}), if we can show that the WLP is preserved by bistellar moves, then Theorem \ref{thm:wlp thm} holds.
	Thanks to Swartz's survey article \cite{Sw14}, we only need to verify this for bistellar moves in a special dimension.
	\begin{thm}[Swartz {\cite[Theorem 3.1 and 3.2]{Sw14}}]\label{thm:bistellar}
		Let $\Delta$ be a $\kk$-homology $(d-1)$-sphere and suppose that $\Delta'$ is obtained from $\Delta$ via a bistellar $i$-move with $i\neq\lfloor d/2\rfloor$. Then
		$\Delta$ has the WLP over $\kk$ if and only if $\Delta'$ dose.
	\end{thm}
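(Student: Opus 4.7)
The plan is to reduce, via Proposition~\ref{prop:WLP}, the bidirectional equivalence to a comparison of ``middle'' surjectivity maps in the Artinian reductions of $\kk[\Delta]$ and $\kk[\Delta']$, and to carry out this comparison through their common quotient $\kk[\Gamma]/\Theta$, where $\Gamma := \Delta - \sigma = \Delta' - \tau$. Here $\sigma$ is the $(d{-}1{-}i)$-face of $\Delta$ with $\mathrm{lk}_\sigma\Delta = \partial\Delta^i$ that is replaced by the $i$-face $\tau$ of $\Delta'$. Since bistellar moves preserve the class of $\kk$-homology spheres, Proposition~\ref{prop:WLP} applies to both complexes and reduces the theorem to showing that a generic pair $(\Theta,\omega)$ makes $\cdot\omega : (\kk[\Delta]/\Theta)_{\lfloor d/2 \rfloor} \to (\kk[\Delta]/\Theta)_{\lfloor d/2 \rfloor + 1}$ surjective if and only if it does so for the analogous map on $\kk[\Delta']/\Theta$.

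First I would write down the two short exact sequences of face rings arising from the shared vertex set,
\[
0 \to (\xx_\sigma) \to \kk[\Delta] \to \kk[\Gamma] \to 0, \qquad 0 \to (\xx_\tau) \to \kk[\Delta'] \to \kk[\Gamma] \to 0,
\]
and identify each principal ideal, up to a degree shift by $|\sigma| = d-i$ or $|\tau| = i+1$, with the face ring of the corresponding closed star $\mathrm{st}_\sigma\Delta = \sigma * \partial\Delta^i$ or $\mathrm{st}_\tau\Delta' = \tau * \partial\sigma$. Since each closed star is a join of a simplex with the boundary of a simplex, multiplicativity of the $h$-polynomial under joins gives $h(\mathrm{st}_\sigma\Delta;t) = 1 + t + \cdots + t^i$ and $h(\mathrm{st}_\tau\Delta';t) = 1 + t + \cdots + t^{d-1-i}$. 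Choosing a sufficiently generic common l.s.o.p.\ $\Theta$ (which keeps both sequences exact after reduction, thanks to Cohen--Macaulayness of the three complexes) yields Hilbert series
\begin{align*}
F\bigl((\xx_\sigma)/\Theta(\xx_\sigma);\,t\bigr) &= t^{d-i} + t^{d-i+1} + \cdots + t^d, \\
F\bigl((\xx_\tau)/\Theta(\xx_\tau);\,t\bigr) &= t^{i+1} + t^{i+2} + \cdots + t^d,
\end{align*}
supported in the intervals $[d-i,d]$ and $[i+1,d]$ respectively.

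The hypothesis $i \neq \lfloor d/2\rfloor$ is used precisely to force one of these supports to begin strictly above the middle source degree $\lfloor d/2\rfloor$: if $i < \lfloor d/2\rfloor$ then $d - i > \lfloor d/2\rfloor$ gives $(\xx_\sigma)_{\lfloor d/2 \rfloor} = 0$, while the case $i > \lfloor d/2\rfloor$ is handled symmetrically by exchanging the roles of $\Delta$ and $\Delta'$ (the inverse of an $i$-move is a $(d{-}1{-}i)$-move). Under $i < \lfloor d/2 \rfloor$, a snake-lemma chase on the $\cdot\omega$-multiplication square over the SES for $\kk[\Delta]$ in degrees $\lfloor d/2\rfloor \to \lfloor d/2\rfloor + 1$ identifies middle surjectivity for $\kk[\Delta]$ with that for $\kk[\Gamma]$, up to a possible one-dimensional contribution in $(\xx_\sigma)_{\lfloor d/2\rfloor + 1}$ (appearing only when $d$ is even and $i = \lfloor d/2\rfloor - 1$) that is automatically covered because $\partial\Delta^i$, being the boundary of a simplex, satisfies the strong Lefschetz property classically. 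The transfer from $\kk[\Gamma]$ up to $\kk[\Delta']$ uses the analogous SES: here $(\xx_\tau)/\Theta(\xx_\tau)$ may intersect both middle degrees, but the classical Lefschetz property for $\partial\Delta^{d-1-i}$, combined with the surjectivity for $\kk[\Gamma]$, again gives surjectivity in the middle for $\kk[\Delta']$.

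The hard part will be arranging a single generic pair $(\Theta, \omega)$ that serves simultaneously as a weak Lefschetz pair for $\kk[\Delta]$, $\kk[\Delta']$, and $\kk[\Gamma]$, and whose restriction acts as a strong Lefschetz element on the shifted Artinian reductions of $\partial\Delta^i$ and $\partial\Delta^{d-1-i}$, so that the snake-lemma diagram chase closes in both directions. Each required condition cuts out a Zariski-open subset of the parameter space, so the real work lies in an inductive genericity argument exploiting the extreme simplicity of the star face rings — each a tensor product of a simplex factor with a boundary-of-simplex factor, for which strong Lefschetz is well known.
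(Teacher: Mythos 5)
The paper does not actually prove Theorem~\ref{thm:bistellar}; it is cited from Swartz's survey \cite{Sw14}, so there is no internal proof to compare against. That said, your strategy — passing through the common deletion $\Gamma=\Delta-\sigma=\Delta'-\tau$ via the principal-ideal short exact sequences, reducing each principal ideal to a shift of the Artinian reduction of the star (a join of a simplex with a boundary-of-simplex, hence $\kk[y]/(y^k)$ after a generic $\Theta$), and using the support intervals $[d-i,d]$ and $[i+1,d]$ to localize the problem — is sound and is very much in the spirit of the paper's own proof of the companion Theorem~\ref{thm:WLP for bistellar}. Your reduction to middle surjectivity via Proposition~\ref{prop:WLP} is also correct, and for $d$ odd the argument closes cleanly in both directions.

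However, there is a genuine gap precisely in the case you flagged and waved away: $d$ even, $i=\lfloor d/2\rfloor-1=d/2-1$, in the direction $\Delta'\ \text{WLP}\Rightarrow\Delta\ \text{WLP}$. Here $(\xx_\sigma)/\Theta(\xx_\sigma)$ is supported in degrees $[d-i,d]=[n+1,d]$ where $n=d/2$, so it vanishes in degree $n$ and is one-dimensional in degree $n+1$. The $\cdot\omega$-multiplication on this kernel module therefore goes from $0$ to $\kk$; it is not surjective, and no Lefschetz property of $\partial\Delta^i$ (which governs only the internal structure of $(\xx_\sigma)/\Theta(\xx_\sigma)$, where it merely gives isomorphisms between its own nonzero graded pieces) can change that. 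The snake lemma then gives exactness of
\[
\ker(f_3)\xrightarrow{\ \delta\ }\kk\longrightarrow\mathrm{coker}(f_2)\longrightarrow\mathrm{coker}(f_3)\longrightarrow 0,
\]
and knowing $\mathrm{coker}(f_3)=0$ (from $\Delta'$) only yields $\dim_\kk\mathrm{coker}(f_2)\le 1$; to conclude $\mathrm{coker}(f_2)=0$ you would need the connecting map $\delta$ to be nonzero, which amounts to showing $\xx_\sigma\in\mathrm{Im}(\cdot\omega)$ inside $\kk[\Delta]/\Theta$ — essentially the statement you are trying to prove. The ``symmetry'' escape route is also unavailable in this case: the inverse of a $(d/2-1)$-move is a $(d-1-(d/2-1))=d/2=\lfloor d/2\rfloor$-move, which is precisely the excluded case of the theorem. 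So this direction needs a different idea (for instance, the two-ideal decomposition with $\phi\oplus\psi$ that the paper uses in the proof of Theorem~\ref{thm:WLP for bistellar}, together with a genuine perturbation of $\omega$), and as written your argument does not establish the full iff for $d$ even.
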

	Hence Theorem  \ref{thm:wlp thm} will follow from:
	\begin{thm}\label{thm:WLP for bistellar}
	If $\kk$ has characteristic $2$, then Theorem \ref{thm:bistellar} also holds for $d=2n-1$ and $i=\lfloor d/2\rfloor=n-1$.
	\end{thm}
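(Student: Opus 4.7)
The bistellar $n$-move replaces $\mathrm{st}_\sigma\Delta = \sigma * \partial\tau$ in $\Delta$ by $\partial\sigma * \tau$ to produce $\Delta'$, where $\sigma$ and $\tau$ are disjoint $n$-simplices. Since $|\sigma| = |\tau| = n+1$, a direct count of faces added and removed by the move shows the $f$-vector (hence the $h$-vector) is preserved: for each $j$, there is a bijection between the $j$-faces of $\mathrm{st}_\sigma\Delta$ containing $\sigma$ and those of $\mathrm{st}_\tau\Delta'$ containing $\tau$. The common subcomplex $\Delta_0 = \Delta - \sigma = \Delta' - \tau$ is a PL $2n$-ball, hence Cohen--Macaulay. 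Pick a generic l.s.o.p.\ $\Theta$ in the sense of Remark~\ref{rem:generic}; then $\Theta$ is simultaneously an l.s.o.p.\ for $\kk[\Delta]$, $\kk[\Delta']$, $\kk[\Delta_0]$, $\kk[\mathrm{st}_\sigma\Delta]$, and $\kk[\mathrm{st}_\tau\Delta']$. Write $M^\Delta = \kk[\Delta]/\Theta$, and likewise for the other complexes.

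\textbf{Snake lemma reduction.}
Since $\kk[\Delta_0]$ is Cohen--Macaulay and $\Theta$ is a regular sequence on it, the exact sequence $0 \to (\xx_\sigma)\kk[\Delta] \to \kk[\Delta] \to \kk[\Delta_0] \to 0$ remains exact after quotienting by $\Theta$. The principal ideal is naturally isomorphic to $\kk[\mathrm{st}_\sigma\Delta]$ as a graded module, shifted by $|\sigma| = n+1$. Since the link $\mathrm{lk}_\sigma\Delta = \partial\Delta^n$ has $h$-vector $(1, 1, \ldots, 1)$ (with $n+1$ ones), Theorem~\ref{thm:stanley} gives that $(\xx_\sigma)M^\Delta$ has Hilbert function $1$ in each of degrees $n+1, n+2, \ldots, 2n+1$ and $0$ elsewhere; the analogous statement holds for $(\xx_\tau)M^{\Delta'}$. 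Apply the snake lemma to multiplication by $\omega$ between degrees $n$ and $n+1$. Using $(\xx_\sigma)M^\Delta_n = 0$ and the Dehn--Sommerville equality $h_n = h_{n+1}$ (so that a surjection $M^\Delta_n \to M^\Delta_{n+1}$ is automatically an isomorphism), one concludes that $(\omega, \Theta)$ is a WLE pair for $\Delta$ if and only if the induced multiplication $\omega_0: M^{\Delta_0}_n \to M^{\Delta_0}_{n+1}$ is surjective and the connecting homomorphism $\delta_\sigma: \ker\omega_0 \to (\xx_\sigma)M^\Delta_{n+1} \cong \kk$ is nonzero. The same reasoning gives a parallel characterization of WLE for $\Delta'$ in terms of $\delta_\tau: \ker\omega_0 \to (\xx_\tau)M^{\Delta'}_{n+1} \cong \kk$.

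\textbf{Main step and principal obstacle.}
The surjectivity of $\omega_0$ depends only on $\Delta_0$, $\Theta$, and $\omega$, so it is a condition common to both characterizations. Thus, assuming WLP for $\Delta$ and fixing a generic WLE pair for it, the task reduces to exhibiting a single $(\omega, \Theta) \in \WW(\Delta)$ for which $\delta_\tau \ne 0$; since $\{\delta_\tau \ne 0\}$ is Zariski open, WLP for $\Delta'$ follows from the density of its intersection with $\WW(\Delta)$. To construct such a pair, I plan to work inside the Mayer--Vietoris pullback $T^- = \kk[\Delta \cup \Delta']/\Theta \subseteq M^\Delta \oplus M^{\Delta'}$: for $x$ spanning the $1$-dimensional $\ker\omega_0$, the product $\omega x \in T^-_{n+1}$ lies in the $2$-dimensional subspace spanned by $(\xx_\sigma, 0)$ and $(0, \xx_\tau)$, and decomposes as $\delta_\sigma(x) (\xx_\sigma, 0) + \delta_\tau(x)(0, \xx_\tau)$. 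I would then exploit the local $\sigma \leftrightarrow \tau$ symmetry in the cone $\sigma * \tau$ (the standard $(2n+1)$-simplex) together with the strong Lefschetz property of $\partial\Delta^n$ (a polytopal sphere, by the classical $g$-theorem) to arrange both coefficients to be simultaneously nonzero. The hardest part will be making this symmetry argument rigorous: the $\sigma \leftrightarrow \tau$ exchange is an automorphism only of the local data (the cone $\sigma * \tau$ and the link $\partial\Delta^n$), not of the global complexes $\Delta$ and $\Delta'$, so the symmetric local choice of $(\omega, \Theta)$ must be reconciled carefully with the global l.s.o.p.\ and WLE requirements---this reconciliation is where the genuinely new combinatorial/algebraic input of the theorem is needed.
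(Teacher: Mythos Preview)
Your snake-lemma reduction is correct and is in fact a cleaner way to phrase what the paper does with the exact sequences \eqref{eq:exact sequence}: with $h_n=h_{n+1}$, the WLE condition at the middle degree for $\Delta$ (resp.\ $\Delta'$) is equivalent to ``$\omega_0:M^{\Delta_0}_n\to M^{\Delta_0}_{n+1}$ surjective and $\delta_\sigma\neq0$'' (resp.\ $\delta_\tau\neq0$). One small imprecision: Theorem~\ref{thm:WLP for bistellar} is stated for $\kk$-homology spheres, not PL-spheres, so you cannot claim $\Delta_0$ is a PL ball; but you only need $(\xx_\sigma)M^\Delta_n=0$ and $(\xx_\sigma)M^\Delta_{n+1}\cong\kk$, which follow from $|\sigma|=n+1$ and Poincar\'e duality of $\kk[\Delta]/\Theta$, so this is harmless.

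The genuine gap is in your ``Main step.'' You have reduced the problem to showing that, after possibly changing $(\omega,\Theta)$ inside $\WW(\Delta)$, one has $\delta_\tau\neq0$; but you have not done this. Your proposed route---exploit the local $\sigma\leftrightarrow\tau$ symmetry inside the simplex $\sigma*\tau$---is, as you yourself note, not an automorphism of anything global, and you give no mechanism for turning that local symmetry into a statement about $\delta_\tau$ for a global WLE pair. There is a real obstruction here: a priori one can have $(\omega,\Theta)\in\WW(\Delta)$ with $\delta_\tau=0$, so one \emph{must} move $\omega$, and nothing in your outline says how. The Mayer--Vietoris pullback you describe only tells you that $\omega x$ decomposes as $\delta_\sigma(x)(\xx_\sigma,0)+\delta_\tau(x)(0,\xx_\tau)$; it gives no control on the second coefficient.

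The paper closes exactly this gap, and by a different idea than symmetry: it \emph{perturbs} $\omega$ to $\omega+tx_1$, where $\{1\}=\tau\setminus\tau_1$ for a chosen facet $\tau_1\subset\partial\tau$. Concretely, it writes the middle-degree multiplication maps for $\Delta$ and $\Delta'$ as $h_n\times h_n$ matrices $A$ and $A'$ in bases of face monomials, uses Lemmas~\ref{lem:generator for simplex case} and~\ref{lem:generator for sphere case} (which need the genericity of $\Theta$) to show that $A$ and $A'$ agree outside the $(1,1)$-entry, and then---when $|A'|=0$---analyzes the matrix $B$ of multiplication by $x_1$: Lemma~\ref{lem:generator for sphere case} forces $b_{11}\neq0$, while the combinatorics of $\mathrm{st}_{\tau_1}\Delta'$ force the rest of the first row and column of $B$ to vanish. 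An elementary determinant argument (Lemma~\ref{lem:linear algebra}) then produces $t\in\kk$ with $A'+tB$ invertible, i.e.\ $\omega+tx_1$ a WLE for $\Delta'$. In your language, this is precisely a controlled way to move $\omega$ so that $\delta_\tau$ becomes nonzero while the map on $M^{\Delta'}$ stays invertible; the substantive content is Lemmas~\ref{lem:generator for simplex case}--\ref{lem:generator for sphere case}, which have no analogue in your outline.
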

	Before proving Theorem \ref{thm:WLP for bistellar}, let us see how it implies Theorem  \ref{thm:wlp thm}. The even-dimensional case is obvious.
	For the odd-dimensional case, the link of each vertex of $\Delta$ is an even-dimensional PL-sphere, so we can deduce the WLP of $\Delta$ by applying the following result of Swartz to the links of all vertices of $\Delta$.
	
	\begin{thm}[Swartz {\cite[Theorem 4.26]{S09}}]\label{thm:link wlp}
		Let $\Delta$ be a $(d-1)$-dimensional $\kk$-homology manifold on $[m]$.
		If for at least $m-d$ of the vertices $v$ of $\Delta$ and generic linear form $\omega_v$ and l.s.o.p. $\Theta_v$ of $\kk[\mathrm{lk}_v\Delta]$, the multiplication
		\[\cdot\omega_v:\kk(\mathrm{lk}_v\Delta)_{i-1}\to\kk(\mathrm{lk}_v\Delta)_i\] is surjective, then for generic linear form $\omega$ and l.s.o.p. $\Theta$ of $\kk[\Delta]$, the multiplication $\cdot\omega:\kk(\Delta)_i\to\kk(\Delta)_{i+1}$ is surjective.
	\end{thm}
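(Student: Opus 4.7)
The plan is to compare the global multiplication map $\cdot\omega$ on $\kk[\Delta]/\Theta$ with the link multiplications $\cdot\omega_v$ on $\kk[\mathrm{lk}_v\Delta]/\Theta_v$ through the natural ``star'' filtration of the face ring, and to use the hypothesis at the $m-d$ good vertices to cover the target. Since $\Delta$ is a $\kk$-homology manifold, it is Buchsbaum by Theorem \ref{thm:algebraic property}(\ref{item:3}), so for generic $\Theta$ the module $\kk[\Delta]/\Theta$ behaves well enough that such a local-to-global comparison is available.

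First I would fix a generic l.s.o.p.\ $\Theta=\{\theta_1,\dots,\theta_d\}$ and a generic linear form $\omega$, as in Remark \ref{rem:generic}. Genericity of $M_\Theta$ implies $\dim_\kk(\kk[\Delta]/\Theta)_1=m-d$, and that for every vertex $v$ the restriction homomorphism $r_v:\kk[\Delta]\to\kk[\mathrm{lk}_v\Delta]$ sends $\Theta$ to an l.s.o.p.\ $\Theta_v$ for $\kk[\mathrm{lk}_v\Delta]$ and $\omega$ to a generic linear form $\omega_v$ (so that the hypothesis on those $m-d$ vertices may indeed be applied with the correct $\omega_v,\Theta_v$). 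The key algebraic ingredient is the ``star to link'' identification: multiplication by the vertex variable $x_v$ gives a surjection
\[
\psi_v:(\kk[\mathrm{lk}_v\Delta]/\Theta_v)_{j-1}\twoheadrightarrow x_v\cdot(\kk[\Delta]/\Theta)_j,
\]
reflecting the combinatorial identity $\mathrm{st}_v\Delta=v\ast\mathrm{lk}_v\Delta$, and commuting with multiplication by the images of $\omega$ on both sides (up to the identification of $\omega_v$ with the image of $\omega$ in the link).

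Next I would spread the target across vertices. Every face monomial $\xx_\sigma\in(\kk[\Delta]/\Theta)_{i+1}$ with $\sigma\ne\varnothing$ lies in $x_v(\kk[\Delta]/\Theta)_i$ for any $v\in\sigma$, hence in the image of some $\psi_v$. Therefore the subspaces $x_v\cdot(\kk[\Delta]/\Theta)_i$ span $(\kk[\Delta]/\Theta)_{i+1}$ as $v$ ranges over all vertices of $\Delta$. Using the linear relations $\theta_1=\dots=\theta_d=0$ in $\kk[\Delta]/\Theta$, and the genericity of $M_\Theta$ which makes every $d$ columns of $M_\Theta$ nonsingular, I would show that any $d$ chosen vertices $\{v_1,\dots,v_d\}$ give variables $x_{v_1},\dots,x_{v_d}$ whose images in $\kk[\Delta]/\Theta$ are $\kk$-linear combinations of the images of the remaining $m-d$ vertex variables. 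In particular, if $V_0$ is the set of $m-d$ vertices satisfying the hypothesis, then $\{x_v:v\in V_0\}\cdot(\kk[\Delta]/\Theta)_i$ already spans $(\kk[\Delta]/\Theta)_{i+1}$.

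Finally I would conclude by ``lifting'' the assumed surjectivities. For each $v\in V_0$, the hypothesis gives $\cdot\omega_v:(\kk[\mathrm{lk}_v\Delta]/\Theta_v)_{i-1}\to(\kk[\mathrm{lk}_v\Delta]/\Theta_v)_i$ is surjective, so via $\psi_v$ every element of $x_v(\kk[\Delta]/\Theta)_i$ is of the form $\omega\cdot\eta$ for some $\eta\in(\kk[\Delta]/\Theta)_i$. Summing over $v\in V_0$ and using the span statement of the previous paragraph, $\cdot\omega:(\kk[\Delta]/\Theta)_i\to(\kk[\Delta]/\Theta)_{i+1}$ is surjective. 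The main obstacle is the rigorous construction of the comparison map $\psi_v$ and its compatibility with the l.s.o.p.: one must verify that a generic $\Theta$ really does restrict to a generic l.s.o.p.\ on every link, and that error terms arising when a monomial $\xx_\sigma$ belongs simultaneously to several vertex stars do not spoil the bookkeeping. Pinning down precisely why the ``exceptional'' $d$ vertices are absorbed by the l.s.o.p.\ relations—so that the bound $m-d$ is exactly what is needed and no weaker hypothesis would suffice—is the technical heart of the argument.
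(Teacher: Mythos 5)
The theorem you are proving is quoted in the paper from Swartz \cite{S09}; this paper contains no proof of it, so I can only evaluate your argument on its own terms.

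Your overall strategy --- spanning $(\kk[\Delta]/\Theta)_{i+1}$ by $\sum_{v\in V_0}x_v\cdot(\kk[\Delta]/\Theta)_i$ using genericity of $M_\Theta$, and then covering each summand by invoking the link hypothesis --- is the right skeleton, and the spanning step is carried out correctly. However there are two places where the argument, as written, does not hold up.

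First, a bookkeeping error: the restriction $r_v:\kk[\Delta]\to\kk[\mathrm{lk}_v\Delta]$ (kill $x_w$ for $w\notin\mathrm{lk}_v\Delta$, including $w=v$) sends the $d$ forms $\theta_1,\dots,\theta_d$ to $d$ linear forms, but $\kk[\mathrm{lk}_v\Delta]$ has Krull dimension $d-1$, so its l.s.o.p.\ has only $d-1$ elements. The correct $\Theta_v$ is obtained by first restricting $\Theta$ to $\mathrm{st}_v\Delta$, performing row operations so that one form becomes $x_v+\ell$ with $\ell$ supported on $\mathrm{lk}_v\Delta$, discarding that form, and taking the remaining $d-1$ forms; equivalently $\Theta_v$ is the image of $\Theta$ under the composite $\kk[\Delta]\to\kk[\mathrm{st}_v\Delta]=\kk[\mathrm{lk}_v\Delta][x_v]$ followed by eliminating $x_v$. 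This is not the same ideal in $\kk[\mathrm{lk}_v\Delta]$ as $(r_v(\Theta))$, which has one extra generator.

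Second, and more seriously, the key compatibility you invoke for the lifting step does not hold with the naive restriction $\omega_v:=r_v(\omega)$. Write $\omega=c_vx_v+\rho_v(\omega)+\sum_{w\notin\mathrm{st}_v\Delta}c_wx_w$, where $\rho_v(\omega)$ is supported on $\mathrm{lk}_v\Delta$. For a lift $\wh\beta\in\kk[\mathrm{lk}_v\Delta]$ of $\beta$ one computes in $\kk[\Delta]/\Theta$, using $x_vx_w=0$ for $w\notin\mathrm{st}_v\Delta$,
\[
\psi_v(\omega_v\beta)\;=\;x_v\,\rho_v(\omega)\,\wh\beta\;=\;\omega\,(x_v\wh\beta)\;-\;c_v\,x_v(x_v\wh\beta).
\]
The correction term $c_vx_v(x_v\wh\beta)$ again lies in $x_v(\kk[\Delta]/\Theta)_i$, so the inclusion you derive is only $x_v(\kk[\Delta]/\Theta)_i\subset\omega(\kk[\Delta]/\Theta)_i+x_v(\kk[\Delta]/\Theta)_i$, which is vacuous. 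The remedy is to use not $r_v(\omega)$ but the image $\widetilde\omega_v$ of $\omega$ under $\kk[\Delta]/\Theta\to\kk[\mathrm{st}_v\Delta]/\Theta\cong\kk[\mathrm{lk}_v\Delta]/\Theta_v$ (that is, also substitute $x_v\mapsto-\ell$). Then $\widetilde\omega_v$ lifts to $\omega-c_v\theta_d$ plus terms killed by $x_v$, and since $x_v\theta_d\equiv 0\pmod\Theta$ one gets the clean identity $\psi_v(\widetilde\omega_v\beta)=\omega(x_v\wh\beta)$, after which the argument closes. This in turn forces you to justify that the pair $(\widetilde\omega_v,\Theta_v)$ lands in the Zariski-open set where the hypothesis on $\mathrm{lk}_v\Delta$ applies, for generic $(\omega,\Theta)$; you flag this but do not address it, and it is not automatic because the map $(\omega,\Theta)\mapsto(\widetilde\omega_v,\Theta_v)$ is only a rational map whose dominance must be checked. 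So the proposal identifies the correct local-to-global mechanism, but the compatibility of $\psi_v$ with $\cdot\omega$ is handled incorrectly and the genericity transfer is left open; both are essential and nontrivial.
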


\subsection{Proof of Theorem \ref{thm:WLP for bistellar}}
Let $\Delta$ be a $\kk$-homology $(2n-2)$-sphere on $[m]$, which has the WLP, $\Delta'=\chi_\sigma\Delta$ with $\dim\sigma=n-1$ and  $\mathrm{lk}_\sigma\Delta=\partial\tau$.
Without loss of generality, we may assume that $\sigma=\{1,\dots,n\}$ and $\tau=\{n+1,\dots,2n\}$.
We need to show that $\Delta'$ also has the WLP. First we will give two lemmas that hold in arbitrary characteristic.
	
In the above notation, let
\[D=\mathrm{cone}\,\Delta=\{v\}*\Delta,\quad D_1=D-\sigma,\quad K=D\cup_{\mathrm{st}_\sigma\Delta} (\sigma*\tau).\]
Then $D$, $D_1$ and $K$ are all $\kk$-homology $(2n-1)$-balls, and $\partial K=\Delta'$.
Let $\Theta=\{\theta_1,\dots,\theta_{2n}\}$ be a generic l.s.o.p for $\kk[K]$. After a linear trasformation if necessary, we may assume that 
\begin{equation}\label{eq:matrix}
	M_\Theta=\bordermatrix{
		& v       &\sigma     &\tau    &U\cr
		& a_1         & \raisebox{-5pt}{{\large $I_n$}}       &    \raisebox{-5pt}{{\large $T$}}   &\raisebox{-5pt}{{\large $*$}}\cr
		& \vdots         &       &     &\cr
		& a_{2n}         &    \raisebox{-0pt}{{\large $0$}}  &\raisebox{0pt}{{\large $I_n$}}     &\raisebox{0pt}{{\large $*$}}\cr
	},
\end{equation}
where $U=\{2n+1,\dots,m\}$, $I_n$ is the $n\times n$ identity matrix, and $T$ is a diagonal matrix:	
\[
T=\begin{pmatrix}
	t_1                &\cdots     & 0 \\
	
	\vdots       &\ddots     &\vdots\\
	0              &\cdots     & t_n
\end{pmatrix}.
\]
To make the subsequent calculation easier, we further require that $a_1,\dots,a_n=0$ and $a_{n+1},\dots,a_{2n}=1$ in \eqref{eq:matrix}. This requirement is reasonable by remark \ref{rem:boundary}.

\begin{lem}\label{lem:basis}
Let $\tau'=\{n+1,\dots,2n-1\}$, and let $\sigma_1=\{v\}\cup\tau'$. Then $\kk(K,\partial K)_n$ has a basis of the form: $\{\xx_\sigma, \xx_{\sigma_1},\dots,\xx_{\sigma_s}\}$, where $\sigma_i\in D_1\setminus\partial D_1$ for $2\leq i\leq s$.
\end{lem}
\begin{proof}
	Let $I_0$ be the ideal of $\kk[\sigma*\tau]$ generated by $\xx_{\sigma}$. Then we have a short exact  sequence:
	\begin{equation}\label{eq:exact1}
		0\to \kk(D,\partial D)_n\to \kk(K,\partial K)_n\to (I_0/I_0\Theta)_n\to 0.
	\end{equation}
	Similarly, let $D_0=\mathrm{st}_\sigma D=\{v\}*\mathrm{st}_\sigma\Delta$, and let $J_0$ be the ideal of $\kk[D_0]$ generated by $x_v$. Then there is another short exact sequence
	\begin{equation}\label{eq:exact2}
		0\to\kk(D_1,\partial D_1)\to \kk(D,\partial D)\to J_0/J_0\Theta\to 0.
	\end{equation}

	Since $D_0=\mathrm{cone}(\sigma*\partial\tau)$, it is easy to see that  $\kk(D_0)_{n-1}=\kk$ is generated by $\xx_{\tau'}$. Note that there is an obvious isomorphism
	\[\kk(D_0)_{*-1}\xr{\cdot x_v} (J_0/J_0\Theta)_*.\]
	Using \eqref{eq:exact2} and this isomorphism, we can pick  a basis $\{\sigma_1,\dots,\sigma_s\}$ for $\kk(D,\partial D)_n$ with $\sigma_i\in D_1\setminus\partial D_1$ for $2\leq i\leq s$.
	\eqref{eq:exact1} shows that $\{\sigma, \sigma_1,\dots,\sigma_s\}$ is a basis for $\kk(K,\partial K)_n$.
\end{proof}	

\begin{lem}\label{lem:kernel}
	If the map $\phi:\kk(K,\partial K)_n\to \kk(K)_n$ is not an isomorphism, then $\ker\phi$ is  spanned by an element of the form:
	\[\alpha=\xx_\sigma+\sum_{i=1}^s l_i\xx_{\sigma_i},\text{ where }l_i\in\kk\ \text{ and }\ l_1=(-1)^n\prod_{i=1}^{n}t_i.\]
\end{lem}
\begin{proof}
	Since $\partial D=\Delta$ has the WLP, Proposition \ref{prop:WLP criterion} shows that the map  $\kk(D,\partial D)_n\to \kk(D)_n$ is an isomorphism. Note that this isomorphism factors through
	\[\kk(D,\partial D)_n\to \kk(K,\partial K)_n\xr{\phi} \kk(K)_n.\]
	Hence if $\phi$ is not an isomorphism, then $\dim_\kk\ker\phi=1$, and by Lemma \ref{lem:basis}, $\ker\phi$ is spanned by an element $\alpha$ of the form in the lemma. 
	
	Since $\mathrm{st}_\sigma K=\sigma*\partial(\tau\cup\{v\})$, the expression of $M_\Theta$ implies that $\xx_\sigma$ restricts to $(-1)^{n-1}(\prod_{i=1}^{n}t_i)\xx_{\sigma_1}$ in $\kk(\mathrm{st}_\sigma K)$. Thus, the equation \[\phi_\sigma\circ\phi(\alpha)=\xx_\sigma+l_1\xx_{\sigma_1}=0,\ \text{ where }\ \phi_\sigma:\kk(K)\to \kk(\mathrm{st}_\sigma K),\]
	gives $l_1=(-1)^{n}\prod_{i=1}^{n}t_i$.
\end{proof}

We are now ready to prove Theorem \ref{thm:WLP for bistellar}.

\begin{proof}[Proof of Theorem \ref{thm:WLP for bistellar}]
By proposition \ref{prop:WLP criterion}, we only need to show that the map $\phi:\kk(K,\partial K)_n\to \kk(K)_n$ is an isomorphism, since $\partial K=\Delta'$. 

Set $\beta=\sum_{i=1}^s l_i\xx_{\sigma_i}$. If $\phi$ is not an isomorphism, then Lemma \ref{lem:kernel} implies that $\xx_\sigma=\beta$ in $\kk(D)$. Note that $\mathrm{char}\,\kk=2$. 
Hence we have
\begin{equation}\label{eq:square}
\Psi_D(\beta^2)=\Psi_D(\xx_\sigma\beta)=l_1\Psi_D(\xx_\sigma\xx_{\sigma_1})=t_1\cdots t_n,
\end{equation}
where the second equality follows by the fact that $\sigma_i\not\in\mathrm{st}_\sigma\Delta$ for $i\geq 2$ by Lemma \ref{lem:basis}, and the third equality uses Lemma \ref{lem:kernel} and the fact that $\det(M_\Theta(\sigma_1\cup\sigma))=1$.
 
Viewing $t_i$ as variebles, the canonical function $\Psi_D$ and the above coefficients $l_i$ are then functions on $t_1,\dots,t_n$. 
Define a partial diffenrential operator $\PP:=\frac{\partial^n}{\partial t_1\cdots\partial t_n}$.
Then by \eqref{eq:square}, we have
\[\PP\Psi_D(\beta^2)=\PP\sum_{i=1}^s l_i^2\Psi_D(\xx_{\sigma_i}^2)=\sum_{i=1}^s l_i^2\PP\Psi_D(\xx_{\sigma_i}^2)=1,\]
where the first and second equality both come from the assumption that $\mathrm{char}\,\kk=2$.
However, since $\tau\not\in D$, it follows that for any $\sigma_i$, there is some $n_i\in n+1,\dots,2n$ with $n_i\not\in\FF_0(L_i)$, where $L_i=\mathrm{st}_{\sigma_i}D$. This implies that $t_{n_i}$ dose not appear in the function $\Psi_D(\xx_{\sigma_i}^2)=\Psi_{L_i}(\xx_{\sigma_i}^2)$. Therefore $\PP\Psi_D(\xx_{\sigma_i}^2)=0$ for all $1\leq i\leq s$, a contradiction.
\end{proof}

\section{Applications of Theorem \ref{thm:wlp thm}}\label{sec:application}
\subsection{Enumeration of bistellar moves}
$g$-theorem for PL-spheres can be applied to the enumeration problem of bistellar moves on a PL-sphere.
	\begin{thm}
		Let $\Delta$ be a PL-sphere of dimension $d$. Then the number of $k$-moves in the sequence of bistellar moves taking $\Delta$ to $\partial\Delta^{d+1}$ can not exceed  the number of $(d-k)$-moves for $k\leq\lfloor d/2\rfloor$.
	\end{thm}
	This is a direct consequence of the non-negativity of the $g$-vector of a PL-sphere and the following elementary result that describe the effect of a bistellar move on the h-vector.
	Note that the $g$-vector of $\partial\Delta^{d+1}$ is $(1,0,\dots,0)$.
	\begin{prop}[see i.e. {\cite[Proposition 5.1]{S09}}]
		If a triangulated $d$-manifold $\Delta'$ is obtained from $\Delta$ by a bistellar $k$-move, $0\leq k\leq\lfloor\frac{d-1}{2}\rfloor$, then
		\begin{align*}
			g_{k+1}(\Delta')&=g_{k+1}(\Delta)+1;\\
			g_{i}(\Delta')&=g_{i}(\Delta)\ \text{ for } i\neq k+1.
		\end{align*}
		Furthermore, if $d$ is even and $\Delta'$ is obtained from $\Delta$ by a bistellar $d/2$-move, then
		\[g_{i}(\Delta')=g_{i}(\Delta)\ \text{ for all } i.\]
	\end{prop}

\subsection{WLP of simplicial toric veriaties}
	Here is another application of Theorem  \ref{thm:wlp thm} to toric algebraic geometry. Consider $\Sigma$ a complete simplicial fan in $\Rr^d$. Here we require $\Sigma$ to be rational, i.e., each ray  of $\Sigma$ is generated by a primitive vector $\boldsymbol{\lambda}_i=(\lambda_{1i},\dots,\lambda_{di})\in \Zz^d$.
Let $\Delta_\Sigma$ be the \emph{underlying simplicial complex} of $\Sigma$. By definition, $\{i_1,\dots,i_k\}\subset[m]$ is a face of $\Delta_\Sigma$ if and
only if $\boldsymbol{\lambda}_{i_1},\dots,\boldsymbol{\lambda}_{i_k}$ span a cone of $\Sigma$.
It is easy to see that the vectors $\boldsymbol{\lambda}_1,\dots,\boldsymbol{\lambda}_m$ define an l.s.o.p. for $\Qq[\Delta_\Sigma]$: \[\Theta_\Sigma=\{\theta_i=\lambda_{i1}x_1+\cdots+\lambda_{im}x_m\}_{i=1}^d.\]
The rational cohomology of the associated toric variety $X_\Sigma$ can be calculated as follows:
\begin{thm}[{\cite[Danilov]{Dan78}}, see also {\cite[Theorem 12.4.1]{CLS11}}]\label{thm:toric variety}
	Let $\Sigma$ be a complete simplicial fan in $\Rr^d$. Then there is a ring isomorphism
	\begin{gather*}
		H^*(X_\Sigma;\Qq)\cong \Qq(\Delta_\Sigma;\Theta_\Sigma),\\
		H^{2i}(X_\Sigma;\Qq)\cong \Qq(\Delta_\Sigma;\Theta_\Sigma)_i,\quad H^{2i+1}(X_\Sigma;\Qq)=0,
	\end{gather*}
\end{thm}
Note that when $\Sigma$ is complete and simplcial, $\Delta_\Sigma$ is a PL $(d-1)$-sphere, so from Theorem  \ref{thm:wlp thm} and Theorem \ref{thm:toric variety} we get the following theorem, which is a `weak' generalization of the hard Lefschetz theorem on projective toric varieries to all simplicial toric varieties.
\begin{thm}
	Let $\Sigma$ be a complete simplicial fan in $\Rr^d$, $X_\Sigma$ the associated toric variety, $\mu_{i}=\dim H^{2i}(X_\Sigma;\Qq)$. Then the vector
	\[\{\mu_0,\mu_1-\mu_0,\dots,\mu_{\lfloor d/2 \rfloor}-\mu_{\lfloor d/2 \rfloor-1}\}\]
	is a $M$-vector. Moreover, if the rays of $\Sigma$ are in generic positions, then $H^*(X_\Sigma;\Qq)$ has the WLP.
\end{thm}

	\subsection{GKS conjecture}
	While it is well known that the Gr\"unbaum-Kalai-Sarkaria conjecture is a consequence of Theorem  \ref{thm:wlp thm}, we include its proof here by following \cite[Corollary 4.8]{A18}, for the sake of completeness.
	
	\begin{thm}\label{thm:proof BKS-conj}
	Conjecture \ref{conj:B-K-S} holds.
	\end{thm}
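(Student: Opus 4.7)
The plan is to use the PL embedding to realize $\Delta$ as a full subcomplex of an even-dimensional PL-sphere, and then to extract the inequality from the WLP applied in the middle degree of that sphere's face ring.

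First I would use the PL hypothesis to produce the ambient sphere. Compactify $\Rr^{2d}$ to $S^{2d}$ and extend the given PL map to a PL embedding $\Delta\hookrightarrow S^{2d}$. In the PL category this extends to a triangulation $\Sigma$ of $S^{2d}$ in which $\Delta$ is a full subcomplex; this is precisely the place where the PL (rather than merely topological) assumption is essential.

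Next I would apply the WLP. Since $\dim\Sigma=2d$, the Dehn--Sommerville relations give $h_d(\Sigma)=h_{d+1}(\Sigma)$, so by Theorem \ref{thm:WLP of pl sphere} together with Proposition \ref{prop:WLP} one obtains a generic l.s.o.p.~$\Theta$ and a WLE $\omega\in\Qq[\Sigma]$ for which
\[
\cdot\omega\colon(\Qq[\Sigma]/\Theta)_d\;\longrightarrow\;(\Qq[\Sigma]/\Theta)_{d+1}
\]
is an isomorphism. By Stanley's lemma the source is spanned by the face monomials $\xx_\sigma$ with $\sigma\in\FF_{d-1}(\Sigma)$, and the target is spanned by the $\xx_\tau$ with $\tau\in\FF_d(\Sigma)$.

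Finally I would translate this surjection into a face-number bound for the subcomplex $\Delta$. For each $\tau\in\FF_d(\Delta)\subseteq\FF_d(\Sigma)$, write $\xx_\tau=\omega\cdot y_\tau$. Because $\Delta$ is a full subcomplex of $\Sigma$, every $(d-1)$-face appearing with nonzero coefficient in a representative of $y_\tau$ that actually contributes $\xx_\tau$ lies in $\FF_{d-1}(\Delta)$; moreover, for $\sigma\in\FF_{d-1}(\Sigma)$ the link $\mathrm{lk}_\sigma\Sigma$ is a $d$-sphere and so has at least $d+2$ vertices, meaning that $\omega\xx_\sigma$ is a linear combination of at most $d+2$ distinct face monomials $\xx_{\sigma\cup\{v\}}$. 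Combining surjectivity (which expresses the $f_d(\Delta)$ monomials $\xx_\tau$ via the $f_{d-1}(\Delta)$ monomials $\xx_\sigma$) with the bound of $d+2$ cofaces per $\sigma$ yields $f_d(\Delta)\leq(d+2)f_{d-1}(\Delta)$.

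The main obstacle is the combinatorial--algebraic bookkeeping in the last step: the isomorphism $\cdot\omega$ a priori produces a surjection from the whole of $\FF_{d-1}(\Sigma)$, and one must carefully use the fact that $\Delta$ is a full subcomplex together with the $d$-sphere structure of each $\mathrm{lk}_\sigma\Sigma$ to confine the contributions to $\FF_{d-1}(\Delta)$ and cap them at $d+2$ per $(d-1)$-face. This is exactly the content of the argument in \cite[Corollary 4.8]{A18} that the author follows.
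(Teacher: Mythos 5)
Your first two steps match the paper: you embed $\Delta$ in a PL-sphere of dimension $2d$ and invoke the WLP to get a surjective (indeed, by Dehn--Sommerville, bijective) middle multiplication $\cdot\omega\colon(\Qq[\Sigma]/\Theta)_d\to(\Qq[\Sigma]/\Theta)_{d+1}$. The final ``bookkeeping'' step, however, has a genuine gap, and in fact runs in the wrong direction. You assert that since $\mathrm{lk}_\sigma\Sigma$ is a $d$-sphere and therefore has \emph{at least} $d+2$ vertices, $\omega\cdot\xx_\sigma$ is a combination of \emph{at most} $d+2$ face monomials --- these two bounds point opposite ways, and the correct count is that $\omega\cdot\xx_\sigma$ involves $f_0(\mathrm{lk}_\sigma\Sigma)\geq d+2$ square-free terms, with no upper bound available. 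Moreover, the ``fullness'' claim that the preimage $y_\tau\in(\Qq[\Sigma]/\Theta)_d$ of a face monomial $\xx_\tau$ with $\tau\in\Delta$ must be supported on $(d-1)$-faces of $\Delta$ is not justified: one is working modulo the ideal $\Theta$, and nothing prevents $y_\tau$ from involving faces of $\Sigma$ outside $\Delta$. Even granting both claims, counting how many $(d-1)$-faces of $\Delta$ appear in all the $y_\tau$ does not bound $f_d(\Delta)$, since the $\xx_\tau$ are not linearly independent in $(\Qq[\Sigma]/\Theta)_{d+1}$.

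The paper's actual route avoids all of this by pushing the WLP down to $\Delta$ itself: the restriction maps $\kk[\Sigma]/\Theta\twoheadrightarrow\kk[\Delta]/\Theta$ are graded surjections, so surjectivity of $\cdot\omega$ in degree $d$ on $\Sigma$ forces surjectivity of $\cdot\omega\colon(\kk[\Delta]/\Theta)_d\to(\kk[\Delta]/\Theta)_{d+1}$. Since $\kk[\Delta]/\Theta$ is spanned by face monomials, this gives the \emph{upper} bound $\dim(\kk[\Delta]/\Theta)_{d+1}\leq\dim(\kk[\Delta]/\Theta)_d\leq f_{d-1}(\Delta)$. The second, independent ingredient is a \emph{lower} bound $\dim(\kk[\Delta]/\Theta)_{d+1}\geq f_d(\Delta)-(d+1)f_{d-1}(\Delta)$, obtained not from the Lefschetz property at all but from a relation count: for each $(d-1)$-face $\sigma$, after normalizing $M_\Theta$ one sees that exactly $d+1$ of the $\theta_i$ avoid the variables of $\sigma$, and multiplying $\xx_\sigma$ by these produces at most $d+1$ linearly independent relations among the $d$-face monomials. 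Combining the two bounds yields $f_d(\Delta)\leq(d+2)f_{d-1}(\Delta)$. Note in particular that fullness of $\Delta$ in $\Sigma$ is never used. I'd suggest replacing your last paragraph with this two-sided dimension estimate on $(\kk[\Delta]/\Theta)_{d+1}$.
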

	\begin{proof}
	Suppose  $\Delta$ can be piecewise linearly embedded into $\Rr^{2d}$. Then there exists a PL $2d$-sphere $K$ containing $\Delta$ as a subcomplex. By Theorem  \ref{thm:wlp thm}, there exists $(\omega,\Theta)\in\WW(K)$ for any infinite field $\kk$ of characteristic $2$. Hence in the following commutative diagram, the top horizontal map  is a surjection
	\[\xymatrix{
		\kk(K)_d \ar[r]^-{\cdot\omega} \ar[d] & \kk(K)_{d+1}\ar[d]\\
		\kk(K)_d\ar[r]^-{\cdot\omega} &\kk(\Delta)_{d+1}}
	\]
   Since the vertical projections are clearly surjective, the bottom horizontal map is also surjective. So we have
   \begin{equation}\label{eq:inequality 1}
   	\dim_\kk\kk(K)_{d+1}\leq\dim_\kk\kk(K)_d\leq f_{d-1}(\Delta).
   \end{equation}
   The second inequality comes from the fact that $\kk(\Delta)$ is spanned by face monomials.

   Furthermore, for any $(d-1)$-face $\sigma\in\Delta$, choose a facet $\tau\in K$ such that $\sigma\subset\tau$. Without loss of generility, we may assume $\sigma=\{1,\dots,d\}$, $\tau=\{1,\dots,2d+1\}$ and
   \[M_\Theta=(I_{2d+1}\mid A).\]
   Hence the generators $x_1,\dots,x_d$ do not appear in the expressions of the $d+1$ linear forms $\theta_{d+1}\dots,\theta_{2d+1}$. Multiplying $\xx_\sigma$ by $\theta_i$ for each $i\geq d+1$ produces exactly $d+1$ linearly independent relations among the face monomials in $\kk(K)_{d+1}$.
   So there are at most $(d+1)f_{d-1}(\Delta)$ linearly independent relations between the generators, i.e. $d$-face monomials, of $\kk(K)_{d+1}$. It follows that
  \begin{equation}\label{eq:inequality 2}
  	\dim_\kk\kk(K)_{d+1}\geq f_d(\Delta)-(d+1)f_{d-1}(\Delta).
 \end{equation}	
Combining \eqref{eq:inequality 1} with \eqref{eq:inequality 2} gives the desired inequality.
	\end{proof}	
	
	\bibliography{M-A}
	\bibliographystyle{amsplain}
\end{document}